 \newlength{\baseunit}               
\newtheorem*{thm}{Theorem}
\newtheorem{theorem}{Theorem}
\newtheorem{prop}{Proposition}
\newtheorem{lemma}{Lemma}
\newtheorem{corollary}{Corollary}
\theoremstyle{definition}
\newtheorem{define}{Definition}
\newtheorem{example}{Example}
\DeclareMathOperator{\Hom}{Hom}
\DeclareMathOperator{\End}{End}
\DeclareMathOperator{\Id}{Id}
\begin{document}
\pagestyle{plain} 
\title{Positivity and the canonical basis of tensor products of finite-dimensional irreducible representations of quantum $ sl(k)$}
\author{Joshua Sussan}
\date{\today}
\maketitle
\begin{abstract}
In a categorification of tensor products of fundamental representations of quantum $ sl(k) $ via highest weight categories, the indecomposable tilting modules descend to the canonical basis.  Projective functors map tilting modules to tilting modules implying the coefficients of the canonical basis of tensor products of finite dimensional, irreducible representations under the action of the Chevalley generators are positive.  
\end{abstract}

\setcounter{tocdepth}{1}


\section{Introduction}
G. Lusztig gave a geometric construction of the canonical basis in the lower half of the quantized enveloping algebra $ \mathcal{U}^-_q(\mathfrak{g}) $ where $ \mathfrak{g} $ is a finite-dimensional or affine Kac-Moody algebra [Lus].  Due to the geometric nature of this basis, various coefficients associated to this basis are positive and integral.  M. Kashiwara constructed this basis independently using different methods [Kas].
This basis then gives rise to a basis on all irreducible, integrable $ \mathcal{U}_q(\mathfrak{g}) $ modules.  Lusztig then showed how to get a canonical basis for tensor products of such representations.  The goal of this note is to prove that the coefficients of the canonical basis under the action of the Chevalley generators for tensor products of finite dimensional, irreducible representations of $ \mathcal{U}_q(\mathfrak{sl}_k) $ are positive and integral.

In [BFK], $ n $ tensor products of the natural representation of $ \mathfrak{sl}_2 $ were recognized as Grothendieck groups of maximally singular blocks of category $ \mathcal{O}(\mathfrak{gl}_n). $  The action of the Lie algebra was categorified by projective functors acting on these highest weight categories.  In [FKS], this construction was generalized to a categorification of the quantum group on tensor products of arbitrary finite dimensional representations. The categorification in [BFK] was generalized in another direction to a functorial action of $ \mathcal{U}_q(\mathfrak{sl}_k) $ on tensor products of fundamental representations in [Su].  We use this result to prove the following theorem:

\begin{thm}
Let $ \lbrace b_1, \ldots, b_m \rbrace $ be the canonical basis for the tensor product of finite dimensional irreducible $ \mathcal{U}_q(\mathfrak{sl}_k) $ modules $ V(\lambda_1) \otimes \cdots \otimes V(\lambda_k).$ Let $ E_i, F_i, i=1,\ldots,k-1 $ be Chevalley generators for this algebra.  Then $ E_i b_j = \Sigma_n c_{i,j,n} b_n, F_i b_j = \Sigma_n d_{i, j, n} b_n $ where $ c_{i,j,n}, d_{i,j,n} \in \mathbb{N}[q,q^{-1}] $ for all $ i,j,n. $
\end{thm}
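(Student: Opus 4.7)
The plan is to exploit the categorification of the action of $\mathcal{U}_q(\mathfrak{sl}_k)$ on tensor products of fundamental representations constructed in [Su], together with the categorical interpretation of the canonical basis as the classes of indecomposable tilting modules. In [Su], a tensor product $V(\omega_{i_1}) \otimes \cdots \otimes V(\omega_{i_s})$ is identified with the graded Grothendieck group of a direct sum of singular/parabolic blocks of category $\mathcal{O}$, and the Chevalley generators $E_i, F_i$ lift to exact projective (translation) functors $\mathcal{E}_i, \mathcal{F}_i$. The two pillars of the argument are (i) the indecomposable graded self-dual tilting modules descend to canonical basis elements, and (ii) projective functors send tilting modules to tilting modules.

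For (i), I would follow the pattern of [FKS] and verify that the classes $[T(\mu)]$ of indecomposable tiltings meet Lusztig's characterization of the canonical basis: self-duality of $T(\mu)$ yields bar-invariance of $[T(\mu)]$, while the existence of a $\Delta$-filtration with $\Delta(\mu)$ occurring exactly once at the top gives the required upper-triangular expansion in the standard basis $\{[\Delta(\nu)]\}$. For (ii), since projective functors are exact and admit biadjoints, they preserve both standard-filtered and costandard-filtered modules, and hence send tiltings to tiltings. Decomposing $\mathcal{E}_i T(\mu) \cong \bigoplus_\nu T(\nu)^{\oplus c_{\mu,\nu}(q)}$ and $\mathcal{F}_i T(\mu) \cong \bigoplus_\nu T(\nu)^{\oplus d_{\mu,\nu}(q)}$ into indecomposables yields graded multiplicities lying in $\mathbb{N}[q, q^{-1}]$, proving the theorem for tensor products of fundamental representations.

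To handle the general case of an arbitrary tensor product of irreducibles $V(\lambda_1) \otimes \cdots \otimes V(\lambda_N)$, I would write each $\lambda_j$ as a sum $\omega_{i_{j,1}} + \cdots + \omega_{i_{j,r_j}}$ of fundamental weights and embed $V(\lambda_j)$ as the highest-weight Cartan component of $V(\omega_{i_{j,1}}) \otimes \cdots \otimes V(\omega_{i_{j,r_j}})$. By Lusztig's compatibility of canonical bases under such projections, the canonical basis of $V(\lambda_1) \otimes \cdots \otimes V(\lambda_N)$ is identified with a subset of the canonical basis of a larger tensor product of fundamental representations. Categorically this subset should correspond to a collection of indecomposable tiltings whose span is preserved by $\mathcal{E}_i$ and $\mathcal{F}_i$, and (ii) above then transports the desired positivity and integrality. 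The main obstacle I foresee is this last step: one must check that the tilting modules singled out by the Lusztig embedding form a subcategory stable under the projective functors of [Su], equivalently that the projection onto the top Cartan components intertwines with the categorical $\mathcal{U}_q(\mathfrak{sl}_k)$-action.
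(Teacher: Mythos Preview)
Your overall strategy matches the paper's: identify the canonical basis of a tensor product of fundamental representations with the classes of indecomposable graded tilting modules in the parabolic--singular blocks of $\mathcal{O}$ from [Su], use that projective functors preserve tiltings to get positivity there, and then reduce the general case to this one via Lusztig's compatibility of canonical bases.

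Two points are worth flagging. First, your sketch of (i) is more direct than what the paper actually does, and the step ``self-duality of $T(\mu)$ yields bar-invariance of $[T(\mu)]$'' hides real work: the bar involution on a tensor product is twisted by the quasi-$R$-matrix $\Theta$, so one must check that the graded duality on the category descends to exactly this involution. The paper avoids this by an indirect route: it first shows (via Soergel's combinatorics and singular Kazhdan--Lusztig polynomials) that indecomposable projectives in a singular block descend to the Kazhdan--Lusztig basis of the relevant Hecke module $M^Q$, then uses the graded twisting functor $L\widetilde{T}_{w_0}$ to carry projectives to tiltings and the Hecke-module map $\phi$ to carry the KL basis to Lusztig's canonical basis. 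The passage to the parabolic case is then handled by the derived Zuckerman functor together with a separate lemma that the projection $\pi_{i_1,\ldots,i_r}$ commutes with $\psi$. Your direct approach can be made to work (this is closer to how [FKS] proceeds), but you should be explicit about why duality categorifies $\psi$ and why the graded standard filtration gives coefficients in $q^{-1}\mathbb{Z}[q^{-1}]$ rather than merely $\mathbb{Z}[q,q^{-1}]$.

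Second, the obstacle you anticipate in the last paragraph is not actually an obstacle, and the paper does not address it categorically. Once you know positivity for the canonical basis of the ambient tensor product of fundamentals, the reduction to arbitrary irreducibles is purely representation-theoretic: the subspace $V(\lambda_1)\otimes\cdots\otimes V(\lambda_N)$ is a $\mathcal{U}_q(\mathfrak{sl}_k)$-submodule, and by [Lus] its canonical basis is literally a subset of the ambient canonical basis. Applying $E_i$ or $F_i$ to one of these elements gives an $\mathbb{N}[q,q^{-1}]$-combination of ambient canonical basis elements which, by invariance of the submodule, must all lie in the sub-basis. No statement about stability of a subcategory of tiltings under projective functors is required.
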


Different approaches to categorification of quantums groups have been developed recently [KhLa], [Z]. Using geometric techniques, H. Zheng recently proved this result [Z].  Lusztig proved this theorem for the case of a single irreducible finite dimensional representation in the simply-laced case [Lus].  

The goal is to identify graded lifts of tilting modules in parabolic subcategories of $ \oplus_{\bf d} \mathcal{O}_{\bf d} (\mathfrak{gl}_n) $
as a categorification of the canonical basis of $ \Lambda^{i_1} V_{k-1} \otimes \cdots \otimes \Lambda^{i_r} V_{k-1} $ where $ V_{k-1} $ is the natural representation of $ \mathcal{U}_q(\mathfrak{sl}_k). $  Since projective functors preserve the subcategory of tilting modules, the desired positivity and integrality properties follow.  The theorem above then follows from this special case.

{\it Acknowledgements: } The author is very grateful to Mikhail Khovanov for suggesting this problem and helpful comments on an earlier draft of the paper.

\section{The quantum group and its representations}
\label{definitions}

\begin{define}
The quantum group $ \mathcal{U}_{q}(\mathfrak{sl}_k) $ is the associative algebra over $ \mathbb{Q}(q) $ with generators $ E_i, F_i, K_i, K_{i}^{-1} $ for $ i=1, \ldots, k-1 $ satisfying the following conditions:
\begin{enumerate}
\item $ K_i K_i^{-1} = K_i^{-1} K_i = 1$
\item $ K_i K_j = K_j K_i $
\item $ K_i E_j = q^{c_{i,j}} E_j K_i $
\item $ K_i F_j = q^{-c_{i,j}} F_j K_i $
\item $ E_i F_j - F_j E_i = \delta_{i,j} \frac{K_i - K_i^{-1}}{q-q^{-1}} $
\item $ E_i E_j = E_j E_i $ if $ |i-j|>1 $
\item $ F_i F_j = F_j F_i $ if $ |i-j|>1 $
\item $ E_i^2 E_{i \pm 1} - (q+q^{-1}) E_i E_{i \pm 1} E_i + E_{i \pm 1} E_i^2 = 0 $
\item $ F_i^2 F_{i \pm 1} - (q+q^{-1}) F_i F_{i \pm 1} F_i + F_{i \pm 1} F_i^2 = 0 $
\end{enumerate}
where 
\begin{eqnarray*} 
c_{i,j} = 2 & \textrm{ if } &j=i \\
-1 & \textrm{ if } &j=i \pm1\\
0 & \textrm{if} &|i-j|>1.
\end{eqnarray*}
\end{define}

Let $ V_{k-1} $ be the standard $ k- $ dimensional $ \mathcal{U}_q(\mathfrak{sl}_k)- $ module with basis $ v_0, \ldots, v_{k-1}. $  
The algebra acts on this space as follows:
\begin{eqnarray*}
E_i v_{j} = 0 &\textrm{if}	&j \neq i-1\\
E_i v_{j}= v_i &\textrm{if}	&j=i-1\\
F_i v_j = 0 &\textrm{if} & j \neq i\\ 
F_i v_j = v_{i-1} &\textrm{if} &j=i\\ 
K_i^{\pm 1} v_j = q^{\pm 1} v_i &\textrm{if}	&j=i\\
K_i^{\pm 1} v_{j} = q^{\mp 1} v_{i-1} &\textrm{if} &j=i-1\\ 
K_i^{\pm 1} v_j = v_j &\textrm{if} &j \neq i-1, i.
\end{eqnarray*}

We fix a comultiplication $ \Delta \colon \mathcal{U}_q(\mathfrak{sl}_k) \rightarrow \mathcal{U}_q(\mathfrak{sl}_k) \otimes \mathcal{U}_q(\mathfrak{sl}_k) $ as follows:

\begin{eqnarray*}
\Delta(E_i) = 1 \otimes E_i + E_i \otimes K_i\\
\Delta(F_i) = K_i^{-1} \otimes F_i + F_i \otimes 1\\
\Delta{K_i^{\pm 1}} = K_i^{\pm 1} \otimes K_i^{\pm 1}\\
\end{eqnarray*}

Via $ \Delta, $ a tensor product of $ \mathcal{U}_q(\mathfrak{sl}_k)- $ modules becomes a $ \mathcal{U}_q(\mathfrak{sl}_k)- $ module.

Define the $n- $th fundamental representation $ \Lambda^n V_{k-1} $ to be a quotient of $ V_{k-1}^{\otimes n} $ be the submodule generated by elements of the form 
$$ v_{i_1} \otimes \cdots \otimes v_{i_k} \otimes v_{i_{k+1}} \otimes \cdots \otimes v_{i_n} - q^{-1} v_{i_1} \otimes \cdots \otimes v_{i_{k+1}} \otimes v_{i_k} \otimes \cdots v_{i_n}. $$
Let $ v_{i_1} \wedge \cdots \wedge v_{i_n} $ be the image of the vector $ v_{i_1} \otimes \cdots \otimes v_{i_n} $
for $ i_1 > \cdots > i_n. $

Let $ \pi_n \colon V_{k-1}^{\otimes n} \rightarrow \Lambda^n V_{k-1} $ be the map defined by 
$$ \pi_n(v_{w(i_1)} \otimes \cdots \otimes v_{w(i_n)}) = (-1)^{l(w)-n(n-1)/2} q^{l(w)-n(n-1)/2} v_{i_1} \wedge \cdots \wedge v_{i_n}, $$ 
where $ i_1 > \cdots > i_n, $ $ w \in \mathbb{S}_n, $ and $ l(w) $ is the length of the element $ w. $

Let $ \iota_n \colon \Lambda^n V_{k-1} \rightarrow V_{k-1}^{\otimes n} $ be the map
$$ \iota_n(v_{i_1} \wedge \cdots \wedge v_{i_n}) = \Sigma_{w \in \mathbb{S}_n} (-1)^{l(w)} q^{l(w)} v_{w(i_1)} \otimes \cdots \otimes v_{w(i_n)}, $$
where $ i_1 > \cdots > i_n. $

These give maps: 
$$ \pi_{i_1} \otimes \cdots \pi_{i_r} \colon V_{k-1}^{\otimes (i_1 + \cdots + i_r)} \rightarrow \Lambda^{i_1} V_{k-1} \otimes \cdots \otimes \Lambda^{i_r} V_{k-1} $$ 
$$ \iota_{i_1} \otimes \cdots \otimes \iota_{i_r} \colon \Lambda^{i_1} V_{k-1} \otimes \cdots \otimes \Lambda^{i_r} V_{k-1} \rightarrow V_{k-1}^{\otimes (i_1 + \cdots + i_r)}. $$

Lusztig defined an element $ {\Theta} = 1 + \Sigma a_j \otimes b_j $ in a completion of $ {\mathcal{U}_q(\mathfrak{sl}_k)}^{\otimes 2}, $  
where each $ a_j $ is in the subalgebra generated by the set $ \lbrace E_i | i=1, \ldots,k-1 \rbrace $ and each $ b_j $ is in the subalgebra generated by the set $ \lbrace F_i | i=1, \ldots, k-1 \rbrace. $  See chapter 4 of [Lus] for more details.
This gives rise an involution on tensor products of irreducible, integrable representations.  Let $ V $ and $ W $ be two such representations.  The representations $ V $ and $ W $ have involutions $ \psi_V $ and $ \psi_W $ respectively [Lus].
Then there is an involution $ \psi_{V \otimes W} \colon V \otimes W \rightarrow V \otimes W $ defined by
$ \psi(v \otimes w)=\Theta(\psi_V(v) \otimes \psi_W(w)). $
In particular, this defines
$ \psi $ on $ \Lambda^{i_1} V_{k-1} \otimes \cdots \otimes \Lambda^{i_r} V_{k-1}.  $  On $ \Lambda^{j} V_{k-1}, $ $ \psi(v_{a_1} \wedge \cdots \wedge v_{a_j}) = v_{a_1} \wedge \cdots \wedge v_{a_j}. $  

Lusztig and Kashiwara defined a canonical basis on the quantum algebra and on the tensor products of irreducible, integrable representations.  On the fundamental representation $ \Lambda^n(V_{k-1}), $ the canonical basis is the set of elements of the form $ v_{a_1} \wedge \cdots \wedge v_{a_n}. $
On the representation $ \Lambda^{i_1} V_{k-1} \otimes \cdots \otimes \Lambda^{i_r} V_{k-1}, $ there is a basis of elements of the form
$ b^{i_1}_{j_1} \diamond \cdots \diamond b^{i_r}_{j_r}, $ where $ b^{i_s}_{j_s} $ is a canonical basis element for 
$ \Lambda^{i_s} V_{k-1}. $
This basis is uniquely determined by the following two properties (see chapter 27 of [Lus]):

\begin{equation}
\label{kl1}
\psi(b^{i_1}_{j_1} \diamond \cdots b^{i_r}_{j_r}) = b^{i_1}_{j_1} \diamond \cdots \diamond b^{i_r}_{j_r}
\end{equation}

\begin{equation}
\label{kl2}
b^{i_1}_{j_1} \diamond \cdots \diamond b^{i_r}_{j_r} = b^{i_1}_{j_1} \otimes \cdots \otimes b^{i_r}_{j_r} +
q^{-1} \mathbb{Z}[q^{-1}] \Sigma b^{i_1}_{j_1^*} \otimes \cdots \otimes b^{i_r}_{j_r^*} 
\end{equation} 

where the summation is over all basis elements not equal to $ b^{i_1}_{j_1} \otimes \cdots \otimes b^{i_r}_{j_r}. $

\section{Parabolic-singular category $ \mathcal{O} $}
\subsection{Categorification of $ \Lambda^{i_1} V_{k-1} \otimes \cdots \otimes \Lambda^{i_r} V_{k-1} $}
\label{3.1}

Fix a triangular decomposition $ \mathfrak{gl}_n = \mathfrak{n}^- \oplus \mathfrak{h} \oplus \mathfrak{n}^+ $ where $ \mathfrak{n}^- $ are the lower triangular matrices, $ \mathfrak{n}^+ $ are the upper triangular matrices and $ \mathfrak{h} $ are the diagonal matrices.
Let $ e_1, \ldots, e_n $ be a basis for $ \mathfrak{h} $ with a dual basis $ e_1^*, \ldots, e_n^* $ of $ \mathfrak{h}^*. $

\begin{define}
Let $ \mathcal{O}(\mathfrak{gl}_n) $ be the full subcategory of $ \mathfrak{gl}_n $ modules which satisfy the following properties:
\begin{enumerate}
\item Finitely generated as $ \mathcal{U}(\mathfrak{gl}_n)-  $ modules.
\item Diagonalizable under the action of the Cartan subalgebra $ \mathfrak{h}. $
\item Locally finite under the action of the Borel subalgebra $ {\mathfrak{b}} = {\mathfrak{h}} + {\mathfrak{n}}^{+}. $
\end{enumerate}
\end{define}

This category decomposes into a direct sum of subcategories corresponding to the generalized central characters.

\begin{define}
\label{defblocks}
\begin{enumerate}
\item Let 
$ \mathcal{O}_{(d_{k-1}, d_{k-2}, \ldots, d_{0})}(\mathfrak{gl}_n) = \mathcal{O}_{\bf d}(\mathfrak{gl}_n) $
be the block of $ \mathcal{O}(\mathfrak{gl}_n) $ for the central character corresponding to the weight 
$ \omega_{\bf d} = \sum_{i=0}^{k-1} \sum_{j=1}^{d_i} i e_{d_{k-1} + \cdots + d_{i+1} + j}^{*} - \rho, $
where $ e_{d_{k-1} + \cdots + d_{k} + j}^{*} = e_j^* $ and 
$$ \rho = \frac{n-1}{2}e_1 + \frac{n-3}{2}e_2 + \cdots + \frac{1-n}{2}e_n $$
is half the sum of the positive roots.
\item Let $ \mathcal{O}_0 (\mathfrak{gl}_n) $ be the trivial block.
\item Let $ M(a_1, \ldots, a_n) $ be the Verma module with highest weight $ a_1 e_1+ \cdots + a_n e_n - \rho. $
\item Let $ L(a_1, \ldots, a_n) $ be the simple module with highest weight $ a_1 e_1+ \cdots + a_n e_n - \rho. $
\item Let $ P(a_1, \ldots, a_n) $ be the indecomposable projective cover of $ L(a_1, \ldots, a_n). $
\end{enumerate}
\end{define}

There are $ d_i $ terms in the weight from the definition above with coefficient $ i. $  Note that $ \sum_{j=0}^{k-1} d_j = n. $
For a triangulated category $ \mathcal{C}, $ denote by $ [\mathcal{C}] $ the Grothendieck group of $ \mathcal{C}. $  For a triangulated functor $ \mathcal{J} \colon \mathcal{C} \rightarrow \mathcal{D} $ let $ [\mathcal{J}] $ denote the image of the functor on the Grothendieck group.

\begin{prop}
\label{ungraded}
Assume that the following direct sum is over all $ \bf{d} $ such that the entries are non-negative integers and the sum of the entries is n.  Then
$ \mathbb{Q} \otimes_{\mathbb{Z}} [\oplus_{\bf{d}} \mathcal{O}_{{\bf d}}(\mathfrak{gl}_n)] \cong V_{k-1}^{\otimes n} $
where $ V_{k-1} $ is just a $ k- $ dimensional vector space over $ \mathbb{Q}. $
\end{prop}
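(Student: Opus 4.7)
The plan is to construct an explicit $\mathbb{Q}$-linear basis bijection between the two sides. The Grothendieck group of each block $\mathcal{O}_{\bf d}(\mathfrak{gl}_n)$ is free abelian on the classes $[L(a_1,\ldots,a_n)]$ of simple modules, and by the Harish-Chandra parameterization of central characters the simples in $\mathcal{O}_{\bf d}$ have highest weights running through a single dot-orbit of $\omega_{\bf d}$ under $\mathbb{S}_n$. Because the tuple determining $\omega_{\bf d}+\rho$ has exactly $d_i$ entries equal to $i$, this orbit consists of precisely those tuples $(a_1,\ldots,a_n)$ that are permutations of that multiset.

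Next I would define the candidate map
\[
\Phi \colon \mathbb{Q}\otimes_{\mathbb{Z}}[\oplus_{\bf d}\mathcal{O}_{\bf d}(\mathfrak{gl}_n)] \longrightarrow V_{k-1}^{\otimes n}
\]
on basis elements by $\Phi([L(a_1,\ldots,a_n)]) = v_{a_1}\otimes\cdots\otimes v_{a_n}$. The previous paragraph ensures this is well-defined and sends the simples in $\mathcal{O}_{\bf d}$ into the subspace spanned by pure tensors with exactly $d_i$ occurrences of $v_i$.

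For bijectivity I would match cardinalities. The stabilizer of $\omega_{\bf d}$ under the dot-action of $\mathbb{S}_n$ has order $d_0!\cdots d_{k-1}!$, so $\mathcal{O}_{\bf d}(\mathfrak{gl}_n)$ contributes $n!/(d_0!\cdots d_{k-1}!)$ basis vectors to the Grothendieck group; the corresponding weight subspace of $V_{k-1}^{\otimes n}$ has a standard monomial basis of the same size. Summing the multinomial coefficients over all ${\bf d}$ with non-negative entries summing to $n$ yields $k^n$ on either side, so $\Phi$ is an isomorphism of $\mathbb{Q}$-vector spaces.

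The statement is fundamentally combinatorial, so I do not expect any serious obstacle; the content is to identify the basis of simples with the standard monomial basis of $V_{k-1}^{\otimes n}$. The only point demanding care is to fix the simple-to-monomial correspondence coherently, so that in later sections it is compatible with how projective functors will categorify the $\mathcal{U}_q(\mathfrak{sl}_k)$-action, but that compatibility lies beyond this ungraded, $\mathbb{Q}$-linear proposition.
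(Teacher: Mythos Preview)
Your argument is correct: the classes of simple modules form a $\mathbb{Z}$-basis of each $[\mathcal{O}_{\bf d}(\mathfrak{gl}_n)]$, and your multinomial count shows the cardinalities match, so $\Phi$ is an isomorphism of $\mathbb{Q}$-vector spaces.

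The paper's proof is a one-liner that instead sends $[M(a_1,\ldots,a_n)]\mapsto v_{a_1}\otimes\cdots\otimes v_{a_n}$, using Verma modules rather than simples. Since both $\{[M(a_1,\ldots,a_n)]\}$ and $\{[L(a_1,\ldots,a_n)]\}$ are bases of the Grothendieck group, either choice proves the proposition. However, the two choices define \emph{different} isomorphisms: the change-of-basis matrix between Vermas and simples is unitriangular but not the identity, so your $\Phi$ is not the same map as the paper's. You already flag this, but it is worth stressing that the paper's convention (Vermas $\leftrightarrow$ standard monomials) is the one on which everything downstream rests: the identification of generalized Verma modules with wedge monomials, the computation of projective functors on the Grothendieck group via Verma flags, and ultimately the statement that tilting modules descend to the canonical basis. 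Your map would not have these properties without an additional change of basis. So for the bare vector-space statement your route is fine and arguably more explicit about the counting, but for the role the proposition plays in the paper the Verma-based map is the canonical choice.
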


\begin{proof}
The image of the Verma module $ [M(a_1, \ldots, a_n)] $ gets mapped to $ v_{a_1} \otimes \cdots \otimes v_{a_n}. $
\end{proof}

This proposition is the first step towards categorification of $ \mathfrak{sl}_k- $ modules.
Next we would like to categorify the action of the Lie algebra.  The desired functors come directly from [BFK].  It is essentially the projective functor of
tensoring with the $ n-$ dimensional representation $ V_{n-1}. $  One only has to be careful about projecting onto the various blocks.  Define $ \text{proj}_{\bf d} $ the functor of projecting onto the block $ \mathcal{O}_{\bf d}(\mathfrak{gl}_n). $

\begin{define}
\begin{enumerate}
	\item Let 
		$ \mathcal{E}_{i} \colon \mathcal{O}_{(d_{k-1},d_{k-2}, \ldots, d_{0})}(\mathfrak{gl}_n) \rightarrow
		\mathcal{O}_{(d_{k-1},\ldots, d_i +1, d_{i-1}-1, \ldots, d_{0})}(\mathfrak{gl}_n) $ be the functor defined by
		$$ \mathcal{E}_{i} M = \text{proj}_{(d_{k-1}, \ldots, d_i+1, d_{i-1}-1, \ldots, d_{0})}
		(V_{n-1} \otimes M). $$
	\item Let 
		$ \mathcal{F}_{i} \colon \mathcal{O}_{(d_{k-1}, d_{k-2}, \ldots, d_{0})}(\mathfrak{gl}_n) \rightarrow
		\mathcal{O}_{(d_{k-1}, \ldots, d_{i}-1, d_{i-1}+1, \ldots, d_{0})}(\mathfrak{gl}_n) $ by
		$$ \mathcal{F}_{i} M = \text{proj}_{(d_{k-1}, \ldots, d_{i}-1, d_{i-1}+1, \ldots, d_{0})}
	 ({V_{n-1}^{*}} \otimes M). $$
	\item Let 
		$ \mathcal{H}_{i} \colon \mathcal{O}_{(d_{k-1}, d_{k-2}, \ldots, d_{0})}(\mathfrak{gl}_n) \rightarrow
		\mathcal{O}_{(d_{k-1}, d_{k-2}, \ldots, d_{0})}(\mathfrak{gl}_n) $ be 
		$ \Id^{\oplus (d_i - d_{i-1})}. $
\end{enumerate}
\end{define}

Let $ I $ and $ J $ be compositions of $ n. $  
If $ I = i_1 + \cdots+ i_r = n, $ associate to $ I $ the Young subgroup of $ \mathbb{S}_n, $ $ \mathbb{S}_{i_1} \times \cdots \times \mathbb{S}_{i_r}. $
Let $ \mu_I $ and $ \mu_J $ be integral dominant weights stabilized by the subgroups associated to $ I $ and $ J.$
Suppose $ J \subset I, $ (there is a containment of the associated subgroups.)
Then we can define the translation functor $ \theta_{\mu_I}^{\mu_J} = \theta_{I}^{J} $ from $ \mathcal{O}_{\mu_I}(\mathfrak{gl}_n) $ to $ \mathcal{O}_{\mu_J}(\mathfrak{gl}_n). $  It is the projective functor given be tensoring with the finite dimensional, irreducible module with highest weight $ \mu_J - \mu_I $ and then projecting onto the block $ \mathcal{O}_{\mu_J}(\mathfrak{gl}_n). $  There is also an adjoint functor $ \theta_J^I $ of tensoring with the dual module and then projecting onto the appropriate block.
Let $ \theta_0^{i} $ be translation from the trivial block onto the $ ith $ wall and let 
$ \theta_{i}^0 $ be translation from the wall back into the trivial block.
Finally, let $ \theta_{i} = \theta_{i}^0 \theta_0^{i}. $

\begin{define}
\begin{enumerate}
\item The subalgebra $ \mathfrak{p}_{(r_1, \ldots, r_t)} $ is the parabolic subalgebra whose reductive subalgebra is
$ \mathfrak{gl}_{r_1} \oplus \cdots \oplus \mathfrak{gl}_{r_t}, $  where $ r_1 + \cdots + r_t = n. $
\item Denote by $ \mathcal{O}_{\bf d}^{\mathfrak{p}}(\mathfrak{gl}_n) $ the full subcategory of $ \mathcal{O}_{\bf d}(\mathfrak{gl}_n) $ of modules locally finite with respect to the subalgebra
$ \mathfrak{p}. $ Let
$ \mathcal{O}_{\bf d}^{(r_1, \ldots, r_t)}(\mathfrak{gl}_n) $ be the category $ \mathcal{O}_{\bf d}^{\mathfrak{p}_{(r_1, \ldots, r_t)}}(\mathfrak{gl}_n). $
\item Let $ Z^{\mathfrak{p}} \colon \mathcal{O}_{\bf d}(\mathfrak{gl}_n) \rightarrow
\mathcal{O}_{\bf d}^{\mathfrak{p}}(\mathfrak{gl}_n) $ be the dual Zuckerman functor of taking the maximal locally finite quotient with respect to $ \mathcal{U}(\mathfrak{p}). $  The corresponding derived functor on the bounded derived category is $ LZ^{\mathfrak{p}}. $
\item Let $ \epsilon_{\mathfrak{p}} \colon \mathcal{O}_{\bf d}^{\mathfrak{p}}(\mathfrak{gl}_n) \rightarrow \mathcal{O}_{\bf d}(\mathfrak{gl}_n) $ be the exact inclusion functor.
\end{enumerate}
\end{define}

We now recall the definition of the generalized Verma modules which are objects in these locally finite categories.  
\begin{define}
\begin{enumerate}
\item Let $ S $ denote the subset of simple roots defining the parabolic subalgebra $ \mathfrak{p}. $
\item Let $ P_{\mathfrak{p}}^{+} = \lbrace \lambda \in \mathfrak{h}^{*} | \langle \lambda, \alpha \rangle \in \mathbb{Z}_{\geq 0}, \forall \alpha \in S \rbrace. $
\end{enumerate}
\end{define}

Given such a $ \lambda \in P_{\mathfrak{p}}^{+}, $ we may define the generalized Verma module
$ M^{\mathfrak{p}}(\lambda) = \mathcal{U}(\mathfrak{g}) \otimes_{\mathcal{U}(\mathfrak{p})} E(\lambda), $ where $ E(\lambda) $
is the simple $ \mathfrak{p}- $ module with highest weight $ \lambda. $

If $ \lambda = a_1 e_{1}^{*} + \cdots + a_n e_{n}^{*} - \rho, $  then $ \lambda \in P_{\mathfrak{p}}^+ $ if $ a_i > a_{i+1} $ whenever $ \alpha_i \in S. $  In that case, set $ M^{\mathfrak{p}}(\lambda)=M^{\mathfrak{p}}(a_1, \ldots, a_n). $

\begin{prop}
There is an isomorphism of vector spaces
$ \mathbb{C} \otimes_{\mathbb{Z}} [\oplus_{\bf d} \mathcal{O}_{\bf d}^{(r_1, \ldots, r_t)}] \cong \Lambda^{r_1} V_{k-1} \otimes \cdots \otimes \Lambda^{r_t} V_{k-1} $
where $ \Lambda^j V_{k-1} $ is an exterior power of $ V_{k-1}. $
\end{prop}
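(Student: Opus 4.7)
The plan is to identify explicit bases on both sides and exhibit a bijection between them. Let $\mathcal{B}$ denote the set of sequences $(a_1, \ldots, a_n) \in \{0, 1, \ldots, k-1\}^n$ such that $a_j > a_{j+1}$ whenever $j$ and $j+1$ lie in the same block of the composition $(r_1, \ldots, r_t)$.

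First I would show that the classes $[M^{\mathfrak{p}}(a_1, \ldots, a_n)]$ with $(a_1, \ldots, a_n) \in \mathcal{B}$ form a $\mathbb{Z}$-basis of the Grothendieck group $[\oplus_{\bf d} \mathcal{O}_{\bf d}^{(r_1, \ldots, r_t)}(\mathfrak{gl}_n)]$. Parabolic category $\mathcal{O}^{\mathfrak{p}}$ is a highest weight category whose standard objects are the generalized Verma modules $M^{\mathfrak{p}}(\lambda)$ for $\lambda \in P_{\mathfrak{p}}^+$, and unpacking $\lambda = a_1 e_1^* + \cdots + a_n e_n^* - \rho \in P_{\mathfrak{p}}^+$ (accounting for the shift by $-\rho$) gives exactly the strict-decrease condition cutting out $\mathcal{B}$; the block label $\bf d$ simply records the multiplicity of each value in $\{0, \ldots, k-1\}$ among the $a_i$, and summing over all $\bf d$ recovers all of $\mathcal{B}$. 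On the other side, $\Lambda^{r_s} V_{k-1}$ has basis $\{v_{b_1} \wedge \cdots \wedge v_{b_{r_s}} : k-1 \geq b_1 > \cdots > b_{r_s} \geq 0\}$ by construction, and taking tensor products yields a basis of the right-hand side indexed again by $\mathcal{B}$.

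The required isomorphism is then the $\mathbb{C}$-linear map
\[
[M^{\mathfrak{p}}(a_1, \ldots, a_n)] \longmapsto (v_{a_1} \wedge \cdots \wedge v_{a_{r_1}}) \otimes \cdots \otimes (v_{a_{n-r_t+1}} \wedge \cdots \wedge v_{a_n}),
\]
which is a bijection of bases. The main potential obstacle is the first step, that the generalized Verma classes span the parabolic Grothendieck group freely; this is standard highest weight theory for $\mathcal{O}^{\mathfrak{p}}$, and is naturally compatible with the previous proposition via the Verma flag on $\epsilon_{\mathfrak{p}} M^{\mathfrak{p}}(a_1, \ldots, a_n)$ (whose subquotients are ordinary Verma modules $M(a_{w(1)}, \ldots, a_{w(n)})$ permuted by the Young subgroup $\mathbb{S}_{r_1} \times \cdots \times \mathbb{S}_{r_t}$) together with the antisymmetrizing maps $\pi_{r_1} \otimes \cdots \otimes \pi_{r_t}$. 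Everything else is combinatorial bookkeeping.
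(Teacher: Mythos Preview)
Your proposal is correct and follows essentially the same approach as the paper: you define the isomorphism by sending $[M^{\mathfrak{p}}(a_1,\ldots,a_n)]$ to the corresponding tensor of wedge products and observe that this is a bijection of bases. The paper's proof is in fact only a one-line sketch asserting exactly this map and that it is ``clearly a bijection''; your version supplies the bookkeeping (the indexing set $\mathcal{B}$ and the highest-weight-category justification that generalized Verma classes form a basis) that the paper leaves implicit.
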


\begin{proof}
Let $ \mathfrak{p} $ be the subalgebra given above.
The isomorphism sends $ [M^{\mathfrak{p}}(a_1, \ldots, a_n)] $ to
$$ (v_{a_1} \wedge \cdots \wedge v_{r_1}) \otimes \cdots \otimes
(v_{a_{r_1 + \cdots + r_{t-1} + 1}} \wedge \cdots \wedge v_{a_{r_1+ \cdots + r_t}}). $$
This is clearly a bijection.
\end{proof}

If any of the $ r_i $ above is larger than $ k, $ then the category contains no non-trivial objects.

\subsection{Tilting modules}
We now introduce a collection of modules which will descend to the canonical basis in the Grothendieck group.  The tilting objects in category $ \mathcal{O} $ were classified by Collingwood and Irving [CI].
\begin{theorem}
\label{tilting}
In $ \mathcal{O}_0(\mathfrak{gl}_n), $ for each $ w \in \mathbb{S}_n, $ there exists a unique (up to isomorphism), indecomposable module $ T(w) $ such that $ T(w) $ is self-dual and $ T(w) $ has a Verma flag with $ M(w) $ occuring as a submodule.
\end{theorem}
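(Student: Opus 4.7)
The plan is to invoke Ringel's general theory of tilting modules in highest weight categories. First I would check that $\mathcal{O}_0(\mathfrak{gl}_n)$ is a highest weight category with weight poset $\mathbb{S}_n$ under the dot-action order, standard objects the Verma modules $M(w):=M(w\cdot 0)$, and costandard objects the dual Vermas $M(w)^{\vee}$; this is classical. The BGG-type contravariant duality $d$ on $\mathcal{O}$ fixes simple modules and interchanges $M(w)$ with $M(w)^{\vee}$, so it exchanges Verma flags with dual Verma flags.

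Granting this structure, I would construct $T(w)$ by iterated extension. Set $T_0=M(w)$. At stage $i$, if $\Ext^1(M(v),T_i)\neq 0$ for some $v<w$, pick a maximal such $v$ and form a non-split extension
$$ 0\to T_i\to T_{i+1}\to M(v)\to 0. $$
Because $\Ext^1(M(v),M(v'))=0$ unless $v'<v$, each step strictly reduces (in the Bruhat order) the multiset of offending weights, so the process terminates. The resulting module $T(w)$ has a Verma flag with $M(w)$ as a submodule and all other sections $M(v)$ with $v<w$, and satisfies $\Ext^1(M(v),T(w))=0$ for every $v$. The key homological lemma of Ringel's theorem then upgrades this to the existence of a dual-Verma flag, so $T(w)$ is tilting.

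For self-duality I would use that $d$ preserves the class of tilting modules. The multiplicity of $M(w)$ in the Verma flag of $T(w)$ is one, and the weight $w$ is the unique maximal one appearing among the sections, so $d(T(w))$ is again indecomposable tilting with $M(w)$ occurring as a submodule; the uniqueness statement (addressed next) then forces $d(T(w))\cong T(w)$.

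Uniqueness is the step I would carry out last. Let $T'$ be another indecomposable self-dual module admitting a Verma flag with $M(w)\hookrightarrow T'$. Self-duality combined with a Verma flag yields a dual-Verma flag, so $T'$ is tilting and $\Ext^1(M(v),T')=0$ for every $v$. Hence the inclusion $M(w)\hookrightarrow T'$ lifts to a morphism $T(w)\to T'$; applying $d$ produces a morphism $T'\to T(w)$; the composition is the identity on the distinguished $M(w)$-submodule and is therefore an isomorphism by the Fitting lemma applied to the local endomorphism ring of the indecomposable $T(w)$. The main obstacle is the technical core of Ringel's theorem, namely the $\Ext$-vanishing lemma: a module with a Verma flag on which $\Ext^1(M(v),-)$ vanishes for all $v$ automatically admits a dual-Verma flag. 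Once that lemma is in hand, construction, self-duality and uniqueness all fall out.
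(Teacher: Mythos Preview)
Your argument is essentially correct and follows Ringel's general construction of indecomposable tilting objects in a highest weight category. The paper, however, gives no proof here: it simply cites Collingwood--Irving [CI], and as the paper remarks immediately after the parabolic version of the theorem, the construction in [CI] is genuinely different from yours. There the tilting module $T(w)$ is obtained as a direct summand of a composite of wall-crossing translation functors $\theta_{j_s}\cdots\theta_{j_1}$ applied to the simple (antidominant) Verma module $M(w_0)$, rather than by iterated $\Ext^1$-extensions starting from $M(w)$. That translation-functor realization is precisely what the paper exploits downstream (for instance in the proof that the derived Zuckerman functor sends tiltings to shifted tiltings), so from the paper's standpoint it is the operative description; your Ringel-style argument has the virtue of working in any highest weight category but does not directly furnish it.

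Two small points in your write-up deserve tightening. First, your two order statements are inconsistent: you require the new Verma sections $M(v)$ to satisfy $v<w$, yet also assert $\Ext^1(M(v),M(v'))=0$ unless $v'<v$; one of these inequalities must be reversed once you fix whether $<$ denotes Bruhat order on $\mathbb{S}_n$ or the induced order on the weights $w\cdot 0$. Second, in the uniqueness step, obtaining the reverse map by applying the duality $d$ to $f:T(w)\to T'$ is awkward, since $d$ exchanges $M(w)$ with $M(w)^\vee$ and there is no obvious reason the resulting $T'\to T(w)$ restricts to the identity on the distinguished $M(w)$-submodule. It is cleaner to construct $g:T'\to T(w)$ by the same lifting argument (using that $T(w)$ also has a costandard flag, so $\Ext^1(T'/M(w),T(w))=0$) and then apply Fitting's lemma to $g\circ f$.
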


\begin{proof}
See [CI].
\end{proof}

Collingwood and Irving generalize this result to a parabolic subcategory of the trivial block.
From now on let $ \mathfrak{p} $ be a parabolic subalgebra of $ \mathfrak{gl}_n $ containing the reductive subalgebra $ \mathfrak{gl}_{i_1} \oplus \cdots \oplus \mathfrak{gl}_{i_r}. $  Consider the corresponding Young subgroup $ \mathbb{S}_{\bf i} = \mathbb{S}_{i_1} \times \cdots \times \mathbb{S}_{i_r}. $  Let $ (\mathbb{S_{\bf i}} \backslash \mathbb{S}_n)_{shortest} $ be the set of shortest coset representatives.  Let $ w_0^{\bf i} $ be the longest element of this set.

\begin{theorem}
\label{tiltingpar}
For each $ w \in (\mathbb{S_{\bf i}} \backslash \mathbb{S}_n)_{shortest}, $ there exists a unique up to isomorphism, self-dual, indecomposable object $ T^{\mathfrak{p}}(w) $ of $ \mathcal{O}_0^{\mathfrak{p}}(\mathfrak{gl}_n) $  which has a filtration by generalized Verma modules such that $ M^{\mathfrak{p}}(w) $ occurs as a submodule.
\end{theorem}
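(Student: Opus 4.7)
The plan is to deduce Theorem \ref{tiltingpar} from Theorem \ref{tilting} by means of the dual Zuckerman functor $Z^{\mathfrak{p}}$, which passes from $\mathcal{O}_0(\mathfrak{gl}_n)$ to the parabolic subcategory $\mathcal{O}_0^{\mathfrak{p}}(\mathfrak{gl}_n)$ and converts ordinary Verma modules into generalized Verma modules. Roughly, the parabolic tilting $T^{\mathfrak{p}}(w)$ will be exhibited as an indecomposable summand of $Z^{\mathfrak{p}}(T(w'))$ for a suitably chosen $w' \in \mathbb{S}_{\mathbf{i}} w$.

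For existence, I would first record the standard facts about $Z^{\mathfrak{p}}$: if $v$ is a shortest coset representative in $\mathbb{S}_{\mathbf{i}} \backslash \mathbb{S}_n$, then $Z^{\mathfrak{p}}(M(v)) = M^{\mathfrak{p}}(v)$ and $L^{i} Z^{\mathfrak{p}}(M(v)) = 0$ for $i > 0$, while $Z^{\mathfrak{p}}(M(v)) = 0$ on the remaining Vermas. Choose $w'$ in the coset $\mathbb{S}_{\mathbf{i}} w$ so that the Verma flag on $T(w')$ provided by Theorem \ref{tilting} has $M(w')$ as its distinguished submodule. Applying $Z^{\mathfrak{p}}$ to this flag: the vanishing of $L^{1} Z^{\mathfrak{p}}$ on each Verma subquotient preserves exactness of all the short exact sequences, Vermas indexed by non-shortest coset representatives disappear, and what remains assembles into a generalized Verma flag on $Z^{\mathfrak{p}}(T(w'))$ in which $M^{\mathfrak{p}}(w)$ sits at the bottom. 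Self-duality transfers from $T(w')$ because $Z^{\mathfrak{p}}$ intertwines the duality functors on $\mathcal{O}_0$ and on $\mathcal{O}_0^{\mathfrak{p}}$. Define $T^{\mathfrak{p}}(w)$ to be the unique indecomposable summand containing $M^{\mathfrak{p}}(w)$.

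For uniqueness, suppose $T_1$ and $T_2$ are two objects with the claimed properties, and let $\iota \colon M^{\mathfrak{p}}(w) \hookrightarrow T_2$ denote the embedding coming from the Verma flag structure on $T_2$. Induct on the length of a generalized Verma filtration of $T_1$. The Bruhat-type ordering constraint on the filtration forces the successive subquotients $M^{\mathfrak{p}}(v)$ (above the bottom piece $M^{\mathfrak{p}}(w)$) to be indexed by $v$ above $w$, and together with the vanishing $\Ext^{1}_{\mathcal{O}_0^{\mathfrak{p}}}(M^{\mathfrak{p}}(v), M^{\mathfrak{p}}(u)^{\vee}) = 0$ for these pairs, one lifts $\iota$ step by step to a morphism $\phi \colon T_1 \to T_2$. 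Self-duality and indecomposability of both modules, combined with the fact that $\phi$ is nonzero on the socle, then force $\phi$ to be an isomorphism.

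The main obstacle is the preservation of the Verma flag structure under $Z^{\mathfrak{p}}$ at the level of genuine modules rather than only in the Grothendieck group: without the higher vanishing $L^{1} Z^{\mathfrak{p}}(M(v)) = 0$ on shortest coset Vermas, the short exact sequences from the Verma filtration of $T(w')$ could fail to remain exact after applying $Z^{\mathfrak{p}}$, and one would lose control over the ordering of the generalized Verma subquotients and the identification of the distinguished submodule. Pinning down the correct choice of $w'$ inside its coset so that this distinguished submodule is genuinely $M^{\mathfrak{p}}(w)$ is the other delicate point.
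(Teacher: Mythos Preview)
The paper does not prove this theorem; it simply cites Collingwood--Irving [CI]. As the paper remarks immediately afterwards, the construction in [CI] builds $T^{\mathfrak{p}}(w)$ \emph{inside} $\mathcal{O}_0^{\mathfrak{p}}(\mathfrak{gl}_n)$ as a direct summand of translation functors applied to the simple generalized Verma module $M^{\mathfrak{p}}(w_0^{\mathbf{i}})$, rather than by pushing tilting modules down from $\mathcal{O}_0(\mathfrak{gl}_n)$ via Zuckerman functors. Your route is therefore genuinely different from the one the paper invokes.

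However, the existence argument as written has a real gap. The Verma flag of $T(w')$ contains $M(v)$ for many $v\le w'$ in Bruhat order, and most of these $v$ are \emph{not} shortest coset representatives for $\mathbb{S}_{\mathbf{i}}\backslash\mathbb{S}_n$. You only asserted $L^{1}Z^{\mathfrak{p}}M(v)=0$ for shortest $v$; for non-shortest $v$ with distinct block entries this is false: by the Enright--Shelton computation the paper cites, $LZ^{\mathfrak{p}}M(v)$ is a generalized Verma module sitting in strictly positive homological degree, so in particular $L^{1}Z^{\mathfrak{p}}M(v)\neq 0$ whenever $v$ is one simple reflection away from its shortest representative. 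The connecting homomorphisms in the long exact sequences therefore need not vanish, and you cannot conclude that the underived $Z^{\mathfrak{p}}T(w')$ inherits a generalized Verma flag with $M^{\mathfrak{p}}(w)$ at the bottom. The self-duality step has the same problem: the right-exact functor $Z^{\mathfrak{p}}$ (maximal locally finite \emph{quotient}) does not intertwine duality with itself but rather with the left-exact maximal locally finite \emph{submodule} functor, so self-duality of $T(w')$ does not transfer to $Z^{\mathfrak{p}}T(w')$.

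A Zuckerman-based statement does hold at the derived level---this is exactly Corollary~\ref{zucktilt} in the paper, which shows $LZ^{\mathfrak{p}}T(ww_0)\cong T^{\mathfrak{p}}(ww_0)[l(w_0^{\mathbf{i}})]$ by commuting $LZ^{\mathfrak{p}}$ past the translation functors used to build $T(ww_0)$ from the antidominant Verma. But note that the paper places this corollary \emph{after} Theorem~\ref{tiltingpar} and uses the [CI] description of $T^{\mathfrak{p}}(ww_0)$ in its proof; it is a consequence of the theorem, not a substitute for its proof. If you want to salvage your strategy, you would need to work with $LZ^{\mathfrak{p}}$ throughout, establish directly that $LZ^{\mathfrak{p}}T(ww_0)$ is concentrated in a single degree and has the required generalized Verma flag and self-duality there, and do so without already knowing the classification of parabolic tiltings.
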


\begin{proof}
See [CI].
\end{proof}

These tilting objects are constructed as direct summands of translation functors applied to the simple Verma module in the case of theorem ~\ref{tilting} and direct summands of translation functors applied to the simple generalized Verma module in the case of theorem ~\ref{tiltingpar}.  Let $ w_0 $ be the longest element of $ \mathbb{S}_n. $

\begin{corollary}
\label{zucktilt}
There is an isomorphism $ LZ^{\mathfrak{p}} T(w w_0) \cong T^{\mathfrak{p}}(w w_0)[l(w_0^{\bf i})], $
when $ w \in (\mathbb{S_{\bf i}} \backslash \mathbb{S}_n)_{shortest}. $ 
If $ w \notin (\mathbb{S}_{\bf i} \backslash \mathbb{S}_n)_{shortest}, $ then $ LZ^{\mathfrak{p}} T(w w_0) = 0. $ 
\end{corollary}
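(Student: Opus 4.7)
My proof plan would exploit the construction of tilting modules as indecomposable summands of translation functors applied to a distinguished starting object, combined with compatibility of $LZ^{\mathfrak{p}}$ with translation. Both $T(ww_0)$ in the regular block and $T^{\mathfrak{p}}(ww_0)$ in the parabolic block arise as indecomposable summands of compositions $\theta_{i_s}\cdots\theta_{i_1}$ applied to appropriate extremal objects corresponding to a reduced expression of $w$. The strategy is to transport one construction to the other through $LZ^{\mathfrak{p}}$.

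The first step is to establish the commutation $LZ^{\mathfrak{p}} \circ \theta_i \cong \theta_i \circ LZ^{\mathfrak{p}}$ in the bounded derived category. Since each translation functor is exact and is implemented by tensoring with a finite-dimensional module followed by block projection, and tensoring with a finite-dimensional module preserves the class of modules acyclic for $Z^{\mathfrak{p}}$ (this can be verified by resolving acyclics in a way compatible with tensoring, or by noting that $\theta_i$ commutes with the underlying $\mathcal{U}(\mathfrak{p})$-locally-finite-quotient construction up to natural isomorphism), the desired derived-level commutation follows from standard derived-functor manipulations.

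The second step is the base case: computing $LZ^{\mathfrak{p}} T(w_0)$. Since the antidominant Verma $T(w_0) = M(w_0)$ is simultaneously projective and tilting, I would compute $LZ^{\mathfrak{p}} M(w_0) \cong M^{\mathfrak{p}}(w_0)[l(w_0^{\bf i})]$ directly. The natural route is via a parabolic BGG-style resolution of $M^{\mathfrak{p}}(w_0)$ in terms of ordinary Verma modules, or equivalently by interpreting $LZ^{\mathfrak{p}}$ through relative Lie algebra cohomology for the pair $(\mathfrak{p}, \mathfrak{b})$, which produces the shift $l(w_0^{\bf i})$ as the length of the longest element of the parabolic Weyl subgroup.

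Combining these, for $w \in (\mathbb{S}_{\bf i}\backslash\mathbb{S}_n)_{shortest}$ the tilting $T(ww_0)$ appears as an indecomposable summand of $\theta_{i_s}\cdots\theta_{i_1} T(w_0)$; applying $LZ^{\mathfrak{p}}$ exhibits $LZ^{\mathfrak{p}} T(ww_0)$ as a summand of $\theta_{i_s}\cdots\theta_{i_1} M^{\mathfrak{p}}(w_0)[l(w_0^{\bf i})]$, whose indecomposable summands in the parabolic category are precisely the tiltings $T^{\mathfrak{p}}(vw_0)[l(w_0^{\bf i})]$ for shortest coset representatives $v$. Matching labels gives the asserted isomorphism, and when $w \notin (\mathbb{S}_{\bf i}\backslash\mathbb{S}_n)_{shortest}$ the vanishing follows by induction on the Verma flag of $T(ww_0)$, each of whose Vermas corresponds to a non-shortest coset position where $LZ^{\mathfrak{p}}$ already vanishes. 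The main obstacle will be the base case: rigorously pinning down the shift by $l(w_0^{\bf i})$ and verifying that summand labels match on both sides of the commutation with translation. Both steps are essentially bookkeeping but require care with conventions.
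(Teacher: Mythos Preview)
Your approach to the first case ($w$ a shortest coset representative) is essentially the paper's: cite the Enright--Shelton computation $LZ^{\mathfrak{p}} M(w_0) \cong M^{\mathfrak{p}}(w_0^{\bf i})[l(w_0^{\bf i})]$ as the base case, use that $T(ww_0)$ is a summand of a composite of wall-crossings applied to $M(w_0)$, and commute $LZ^{\mathfrak{p}}$ past the translation functors. Both the paper and you leave the final ``matching labels'' step somewhat implicit, and you correctly flag that this is where care is needed.

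Your argument for the vanishing case, however, does not work. You propose to induct on the Verma flag of $T(ww_0)$ and claim that each Verma subquotient ``corresponds to a non-shortest coset position where $LZ^{\mathfrak{p}}$ already vanishes.'' But in the regular block $LZ^{\mathfrak{p}}$ does \emph{not} vanish on any Verma module: by the Enright--Shelton result (see also Proposition~\ref{zuckgroth} in the paper), $LZ^{\mathfrak{p}} M(x)$ is always a shifted parabolic Verma when the entries of $x$ are distinct. So every term in your filtration spectral sequence is nonzero, and the vanishing of $LZ^{\mathfrak{p}} T(ww_0)$ must come from cancellation, not from termwise vanishing. (Moreover, the Verma flag of $T(ww_0)$ contains $M(vw_0)$ for $v \geq w$ in Bruhat order, and such $v$ can certainly include shortest coset representatives.)

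The paper's argument for this case is different and cleaner: write $w = \sigma_{j_1}\cdots\sigma_{j_a}\sigma_{k_1}\cdots\sigma_{k_b}$ with the first block a nontrivial element of $\mathbb{S}_{\bf i}$, so that $T(ww_0)$ is a summand of $\theta_{k_b}\cdots\theta_{j_1} M(w_0)$. Commuting $LZ^{\mathfrak{p}}$ past the translations, the innermost functor $\theta_{j_1}$ now acts on $M^{\mathfrak{p}}(w_0^{\bf i})$, and since $s_{j_1}$ lies in the parabolic Weyl group one has $\theta_{j_1} M^{\mathfrak{p}}(w_0^{\bf i}) = 0$ (wall-crossing across a simple reflection in $\mathbb{S}_{\bf i}$ kills the irreducible parabolic Verma). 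The vanishing therefore happens at the level of the \emph{target} of $LZ^{\mathfrak{p}}$, not via a Verma-flag argument on the source.
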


\begin{proof}
Let $ w \in (\mathbb{S}_{\bf i} \backslash \mathbb{S}_n)_{shortest}. $ By [ES], $ LZ^{\mathfrak{p}} M(w_0) \cong M^{\mathfrak{p}}(w_0^{\bf i})[l(w_0^{\bf i})]. $
Since $ T(w w_0) $ is a direct summand of a translation functor applied to $ M(w_0), $ and translation functors naturally commute with the Zuckerman functor, we get the desired isomorphism.

For the case $ w \notin (\mathbb{S}_{\bf i} \backslash \mathbb{S}_n)_{shortest}, $
write $ w = \sigma_{j_1} \cdots \sigma_{j_a} \sigma_{k_1} \cdots \sigma_{k_b} $ where 
$ \sigma_{j_1} \cdots \sigma_{j_a} \in \mathbb{S}_{\bf i} $ and 
$ \sigma_{k_1} \cdots \sigma_{k_b} \in (\mathbb{S}_{\bf i} \backslash \mathbb{S}_n)_{shortest}. $
Since $ w \notin (\mathbb{S}_{\bf i} \backslash \mathbb{S}_n)_{shortest}, $ 
$ \sigma_{j_1} \cdots \sigma_{j_a} \neq e. $
In the notation of [CI], $ T(w w_0) $ is a direct summand of $ \theta(w) M(w_0) $ where
$ \theta(w) = \theta_{{k_b}} \cdots \theta_{{j_1}}. $

Now, $ LZ^{\mathfrak{p}} T(w w_0) \subset LZ^{\mathfrak{p}} \theta(w) M(w_0) = \theta(w) M^{\mathfrak{p}}(w_0^{\bf i})[l(w_0^{\bf i})] = 0  $ since 
$ \theta_{j_1} M^{\mathfrak{p}}(w_0^{\bf i}) = 0 $ by [CI].
\end{proof}

Tilting modules in category $ \mathcal{O}_{\lambda}(\mathfrak{gl}_n) $ and $ \mathcal{O}_{\lambda}^{\mathfrak{p}} (\mathfrak{gl}_n), $ where $ \lambda $ may be singular, are constructed from the tilting modules in the trivial block via translation.
Let $ \theta_0^{\lambda} $ and $ \theta_{\lambda}^0 $ be translations onto and off the wall respectively.

\begin{prop}
The object $ \theta_0^{\lambda} T(w) $ has a unique (up to isomorphism) direct summand with submodule isomorphic to $ M(w.\lambda). $
\end{prop}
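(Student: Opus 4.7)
The plan is to show that $\theta_0^{\lambda} T(w)$ is itself a tilting module in the (possibly singular) block $\mathcal{O}_{\lambda}(\mathfrak{gl}_n)$, decompose it via Krull--Schmidt into indecomposable tiltings, and then isolate the unique isomorphism class of summand characterized by containing $M(w.\lambda)$ as a submodule. The translation functor $\theta_0^{\lambda}$ is exact (it is tensor product with a finite-dimensional module followed by projection onto a block) and, on Verma modules, satisfies $\theta_0^{\lambda} M(x) \cong M(x.\lambda)$. Applying $\theta_0^{\lambda}$ to a Verma filtration of $T(w)$, furnished by Theorem \ref{tilting}, therefore produces a filtration of $\theta_0^{\lambda} T(w)$ by Vermas $M(x.\lambda)$; the same argument on the dual-Verma filtration shows that $\theta_0^{\lambda} T(w)$ is tilting. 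By Krull--Schmidt we may write $\theta_0^{\lambda} T(w) \cong \bigoplus_i T^{\lambda}(u_i)$ with each $T^{\lambda}(u_i)$ indecomposable.

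For existence of the required summand we apply $\theta_0^{\lambda}$ to the canonical inclusion $M(w) \hookrightarrow T(w)$ of Theorem \ref{tilting}. Exactness then yields an embedding $M(w.\lambda) \hookrightarrow \theta_0^{\lambda} T(w)$ realizing $M(w.\lambda)$ at the bottom of a Verma filtration of $\theta_0^{\lambda} T(w)$. Since a Verma filtration of a direct sum of tiltings is assembled from Verma filtrations of the summands, the bottom Verma $M(w.\lambda)$ must coincide with the bottom Verma of some $T^{\lambda}(u_i)$, so at least one $u_i$ satisfies $u_i.\lambda = w.\lambda$.

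For uniqueness we invoke the singular-block analogue of Theorem \ref{tilting}: each indecomposable tilting in $\mathcal{O}_{\lambda}(\mathfrak{gl}_n)$ is determined up to isomorphism by the unique Verma appearing as a submodule of its Verma filtration. Hence every summand $T^{\lambda}(u_i)$ containing $M(w.\lambda)$ as a submodule belongs to a single isomorphism class, giving the required uniqueness. The main subtlety I anticipate is the precise assertion that the embedded $M(w.\lambda)$ really sits at the bottom of a Verma filtration of one of the summands rather than ``diagonally'' across several --- this requires using both Krull--Schmidt and the fact that within an indecomposable tilting the Verma submodule is unique, while for a direct sum it is only the bottom of the filtration that is intrinsically identified.
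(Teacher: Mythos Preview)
Your argument is essentially correct if one is willing to import the full classification of indecomposable tilting modules in a singular block of category $\mathcal{O}$ (each is determined up to isomorphism by the Verma appearing as its submodule). That result is indeed standard, so as a standalone proof your approach works and is more direct than the paper's. However, within the paper's logical development this is circular: the proposition you are proving is precisely what the paper uses to \emph{define} $T(w.\lambda)$ in the singular block, and only afterwards (in the proposition following the definition) does the paper establish that every indecomposable tilting in $\mathcal{O}_{\lambda}$ arises this way. So the ``singular-block analogue of Theorem~\ref{tilting}'' that you invoke for uniqueness is not yet available at this point in the paper.

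The paper avoids this by reflecting back to the regular block. Given two hypothetical indecomposable summands $T_1, T_2$ each containing $M(w.\lambda)$, it translates them off the wall via $\theta_{\lambda}^0$; a result of Mazorchuk guarantees each $\theta_{\lambda}^0 T_j$ is again indecomposable tilting, hence of the form $T(w_j)$ for some $w_j$. Since $\theta_{\lambda}^0 M(w.\lambda)$ has a Verma flag and embeds in both, the regular-block classification (Theorem~\ref{tilting}) forces $T(w_1)\cong T(w_2)$. Finally, $\theta_0^{\lambda}\theta_{\lambda}^0$ is a direct sum of identity functors, so $T_1\cong T_2$. This buys self-containment relative to the trivial block; your approach buys brevity at the cost of assuming the singular classification up front. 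If you want to keep your line of argument, you should cite an external reference (e.g.\ Ringel's tilting theory for quasi-hereditary algebras, or the relevant part of Collingwood--Irving) rather than a forward reference within the paper.
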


\begin{proof}
Assume $ \theta_0^{\lambda}T(w) \cong T_1(w.\lambda) \oplus T_2(w.\lambda) \oplus T_G $ 
where $ T_1(w.\lambda) $ and $ T_2(w.\lambda) $ are indecomposable tilting objects which have submodules isomorphic to $ M(w.\lambda). $
Then 
$ \theta_{\lambda}^0 \theta_0^{\lambda}T(w) \cong \theta_{\lambda}^0 T_1(w.\lambda) \oplus \theta_{\lambda}^{0} T_2(w.\lambda) \oplus \theta_{\lambda}^0 T_G. $ 
The proof of theorem 4.1 of [Maz] shows that translating an indecomposable tilting module off the wall gives an indecomposable tilting module.  Thus $ \theta_{\lambda}^0 M(w.\lambda) \subset T(w_1) $ and $ \theta_{\lambda}^0 M(w.\lambda) \subset T(w_2) $ for some $ w_1 $ and $ w_2. $
Since $ \theta_{\lambda}^0 M(w.\lambda) $ has a Verma flag, there exists a $ w_3 $ such that $ M(w_3) $ is contained in both $ T(w_1) $ and $ T(w_2). $  Thus $ T(w_1) \cong T(w_2) \cong T(w_3). $
Thus $ \theta_{\lambda}^0 T_1(w.\lambda) \cong \theta_{\lambda}^0 T_2(w.\lambda) \cong T(w_3). $
Therefore $ \theta_0^{\lambda} \theta_{\lambda}^0 T_1(w.\lambda) \cong \theta_0^{\lambda} \theta_{\lambda}^0 T_2(w.\lambda). $
Translating off the wall and then back on to it gives a direct sum of copies of the identity functor.  Therefore
$ T_1(w.\lambda) \cong T_2(w.\lambda). $
\end{proof}

\begin{define}
\begin{enumerate}
\item Suppose $ w \in (\mathbb{S}_n / \mathbb{S}_{\lambda})_{shortest}. $ Let $ T(w.\lambda) $ be an indecomposable summand of $ \theta_0^{\lambda}T(w) $ which has $ M(w.\lambda) $ as a submodule.  
\item Suppose the stabilizer of $ \lambda $ is $ \mathbb{S}_{\bf d}. $  Let $ w \in (\mathbb{S}_{\bf i} \backslash \mathbb{S}_n)_{shortest} $ and $ w \in (\mathbb{S}_n /  \mathbb{S}_{\bf d})_{shortest}. $
Let $ T^{\mathfrak{p}}(w.\lambda) $ be an indecomposable summand of $ \theta_0^{\lambda}T^{\mathfrak{p}}(w) $ which has $ M^{\mathfrak{p}}(w.\lambda) $ as a submodule.
\end{enumerate}
\end{define}

\begin{lemma}
The object $ LZ^{\mathfrak{p}}(T(w.\lambda)) $ is a shifted indecomposable tilting object.
\end{lemma}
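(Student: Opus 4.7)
The plan is to transport the identity of Corollary~\ref{zucktilt} from the trivial block down to the singular block via the translation functor $\theta_0^\lambda$, exploiting the fact that $LZ^{\mathfrak{p}}$ commutes with translation functors.

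First, I would use the defining property that $T(w.\lambda)$ is an indecomposable direct summand of $\theta_0^\lambda T(w)$, so that $\theta_0^\lambda T(w) \cong T(w.\lambda) \oplus T'$ for some complementary summand $T'$. Applying $LZ^{\mathfrak{p}}$ yields
$$ LZ^{\mathfrak{p}}(\theta_0^\lambda T(w)) \;\cong\; LZ^{\mathfrak{p}}(T(w.\lambda)) \oplus LZ^{\mathfrak{p}}(T'). $$
Next I would use that $\theta_0^\lambda$ is an exact projective functor (tensoring with a finite-dimensional module, followed by projection onto a block), which commutes with the Zuckerman functor $Z^{\mathfrak{p}}$ at the abelian level and therefore with $LZ^{\mathfrak{p}}$ at the derived level. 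Combined with Corollary~\ref{zucktilt} (applied after the reindexing $w \mapsto w w_0$ needed to match the form of that statement), this gives
$$ LZ^{\mathfrak{p}}(\theta_0^\lambda T(w)) \;\cong\; \theta_0^\lambda\, LZ^{\mathfrak{p}}(T(w)) \;\cong\; \theta_0^\lambda\, T^{\mathfrak{p}}(w)[l(w_0^{\bf i})]. $$
By the parabolic analog of the preceding proposition (translating an indecomposable parabolic tilting off the wall preserves the tilting property, by the argument of [Maz] applied in the parabolic setting), $\theta_0^\lambda T^{\mathfrak{p}}(w)$ decomposes as a direct sum of indecomposable tilting modules of the form $T^{\mathfrak{p}}(w'.\lambda)$, and in particular contains $T^{\mathfrak{p}}(w.\lambda)$ as the unique summand admitting $M^{\mathfrak{p}}(w.\lambda)$ as a submodule.

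Combining these steps, $LZ^{\mathfrak{p}}(T(w.\lambda))$ is a direct summand in the derived category of a shifted finite direct sum of indecomposable parabolic tiltings, hence is itself a direct sum of shifts of such objects. To pin it down as the single object $T^{\mathfrak{p}}(w.\lambda)[l(w_0^{\bf i})]$, I would track the inclusion $M(w.\lambda) \hookrightarrow T(w.\lambda)$ through $LZ^{\mathfrak{p}}$: by the parabolic-singular analog of the identification of [ES] (obtained by conjugating the trivial-block statement by $\theta_0^\lambda$), this produces an inclusion $M^{\mathfrak{p}}(w.\lambda)[l(w_0^{\bf i})] \hookrightarrow LZ^{\mathfrak{p}}(T(w.\lambda))$, which forces $T^{\mathfrak{p}}(w.\lambda)[l(w_0^{\bf i})]$ to appear as a summand.

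The hard part will be this last identification step, namely verifying that $LZ^{\mathfrak{p}}(T(w.\lambda))$ is a single shifted indecomposable tilting and not a genuine direct sum. Ruling this out requires either a Grothendieck-group count using the known action of $\theta_0^\lambda$ on classes, or a direct argument using that $T(w.\lambda)$ itself is indecomposable and that $LZ^{\mathfrak{p}}$ is fully faithful on the relevant subcategory after the appropriate cohomological shift. This is also where the compatibility between the two indexing conventions (shortest coset representatives for $(\mathbb{S}_{\bf i} \backslash \mathbb{S}_n)$ versus $(\mathbb{S}_n / \mathbb{S}_{\bf d})$) must be handled carefully so that the summand one wishes to identify actually appears with the correct multiplicity.
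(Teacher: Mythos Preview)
Your setup correctly establishes that $LZ^{\mathfrak{p}}(T(w.\lambda))$ is a direct summand of a shifted sum of indecomposable parabolic tiltings, which matches the paper's opening sentence. But the step you flag as the ``hard part'' is precisely the content of the lemma, and neither of your proposed fixes is the right one: a Grothendieck-group count would require already knowing the generalized Verma multiplicities in $LZ^{\mathfrak{p}}(T(w.\lambda))$ (which is circular), and $LZ^{\mathfrak{p}}$ is not fully faithful on tiltings in any sense that would yield indecomposability directly.

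The paper's argument runs in the \emph{opposite} direction from yours: instead of translating \emph{onto} the wall via $\theta_0^\lambda$ and trying to match summands, it translates \emph{off} the wall via $\theta_\lambda^0$. By [Maz], $\theta_\lambda^0$ sends an indecomposable tilting in the singular block to an indecomposable tilting in the trivial block, say $\theta_\lambda^0 T(w.\lambda) \cong T(w')$. Commuting $LZ^{\mathfrak{p}}$ with $\theta_\lambda^0$ and invoking Corollary~\ref{zucktilt} shows that $\theta_\lambda^0\, LZ^{\mathfrak{p}}(T(w.\lambda)) \cong LZ^{\mathfrak{p}} T(w')$ is a single shifted indecomposable parabolic tilting (or zero). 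Now if $LZ^{\mathfrak{p}}(T(w.\lambda))$ decomposed into two nonzero pieces, applying $\theta_\lambda^0$ would give a nontrivial decomposition of this indecomposable object; the only escape is that $\theta_\lambda^0$ annihilates one piece, but since $\theta_0^\lambda \theta_\lambda^0$ is a direct sum of identity functors, $\theta_\lambda^0$ kills no nonzero object. This forces indecomposability immediately, with no multiplicity bookkeeping and no need to track the inclusion of $M^{\mathfrak{p}}(w.\lambda)$.
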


\begin{proof}
By construction, $ LZ^{\mathfrak{p}}(T(w.\lambda)) $ is direct sum of shifted tilting objects.
Assume $ LZ^{\mathfrak{p}} T(w.\lambda) \cong T^{\mathfrak{p}}(w_1.\lambda)[a] \oplus T^{\mathfrak{p}}(w_2.\lambda)[a]. $
Then $ \theta_{\lambda}^0 LZ^{\mathfrak{p}}(w.\lambda) \cong LZ^{\mathfrak{p}} \theta_{\lambda}^0 T(w.\lambda) \cong LZ^{\mathfrak{p}} T(w') $ for some $ w'. $  This object is indecomposable.
On the other hand, it is isomorphic to
$ \theta_{\lambda}^0 T^{\mathfrak{p}}(w_1.\lambda)[a] \oplus \theta_{\lambda}^0 T^{\mathfrak{p}}(w_2.\lambda)[a]. $
Thus, one of these objects is zero.  Since $ \theta^{\lambda}_0 \theta^0_{\lambda} $ is a direct sum of identity functors, either $ T^{\mathfrak{p}}(w_1.\lambda)[a] $ or $ T^{\mathfrak{p}}(w_2.\lambda)[a] $ is zero.
\end{proof}

\begin{prop}
\begin{enumerate}
\item All indecomposable tilting objects of $ \mathcal{O}_{\lambda}(\mathfrak{gl}_n) $ are of the form $ T(w.\lambda). $
\item All indecomposable tilting objects of $ \mathcal{O}_{\lambda}^{\mathfrak{p}}(\mathfrak{gl}_n) $ are of the form $ T^{\mathfrak{p}}(w.\lambda). $
\end{enumerate}
\end{prop}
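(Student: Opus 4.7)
The plan is to mirror the translation-functor argument used in the proof of the preceding proposition. For part (1), let $T$ be an arbitrary indecomposable tilting object of $\mathcal{O}_\lambda(\mathfrak{gl}_n)$, and apply $\theta_\lambda^0$ off the wall. By the theorem of Mazorchuk already cited, $\theta_\lambda^0 T$ is an indecomposable tilting object of $\mathcal{O}_0(\mathfrak{gl}_n)$; Theorem~\ref{tilting} then yields $\theta_\lambda^0 T \cong T(u)$ for some $u \in \mathbb{S}_n$.

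Now I translate back onto the wall. Since $\theta_0^\lambda \theta_\lambda^0$ is isomorphic to a direct sum of copies of the identity functor on $\mathcal{O}_\lambda(\mathfrak{gl}_n)$ (as invoked in the previous proof), one gets $\theta_0^\lambda T(u) \cong \theta_0^\lambda \theta_\lambda^0 T \cong T^{\oplus r}$ for some $r \geq 1$, so by Krull--Schmidt every indecomposable summand of $\theta_0^\lambda T(u)$ is isomorphic to $T$. On the other hand, the inclusion $M(u) \hookrightarrow T(u)$ translates to $\theta_0^\lambda M(u) \cong M(w.\lambda) \hookrightarrow \theta_0^\lambda T(u)$, where $w$ is the shortest representative of the coset $\mathbb{S}_\lambda u$. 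The previous proposition identifies the indecomposable summand containing $M(w.\lambda)$ as $T(w.\lambda)$, so $T \cong T(w.\lambda)$.

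Part (2) will follow from the same scheme applied inside the parabolic subcategory $\mathcal{O}_\lambda^\mathfrak{p}(\mathfrak{gl}_n)$: take an indecomposable tilting $T^\mathfrak{p}$, apply $\theta_\lambda^0$ to obtain an indecomposable tilting object of $\mathcal{O}_0^\mathfrak{p}(\mathfrak{gl}_n)$, identified via Theorem~\ref{tiltingpar} as some $T^\mathfrak{p}(u)$, then translate back and pick out the summand containing the generalized Verma $M^\mathfrak{p}(w.\lambda)$, which is $T^\mathfrak{p}(w.\lambda)$ by the parabolic analogue of the preceding proposition.

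The principal point that needs checking, and what I expect to be the main obstacle, is that $\theta_\lambda^0$ restricted to the parabolic category still sends indecomposable tilting modules to indecomposable tilting modules. I would attack this either by adapting Mazorchuk's argument to the parabolic setting, or by routing through the Zuckerman functor $LZ^\mathfrak{p}$: by the lemma preceding this proposition $LZ^\mathfrak{p}$ sends each $T(w.\lambda)$ to a shifted indecomposable tilting in $\mathcal{O}_\lambda^\mathfrak{p}(\mathfrak{gl}_n)$, and counting against the number of simples of the parabolic block (indexed by $(\mathbb{S}_{\bf i}\backslash \mathbb{S}_n)_{shortest}$ intersected with $(\mathbb{S}_n/\mathbb{S}_{\bf d})_{shortest}$) should independently identify these as a complete list.
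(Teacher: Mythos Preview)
Your proposal is correct and follows essentially the same argument as the paper: translate $T$ off the wall, invoke [Maz] to identify the result as an indecomposable $T(u)$, translate back using $\theta_0^\lambda\theta_\lambda^0\cong\bigoplus\Id$, and match summands. The paper is terser---for part~(2) it simply says ``similar''---whereas you rightly isolate the one nontrivial point, namely that $\theta_\lambda^0$ preserves indecomposability of tilting objects in the parabolic subcategory; the paper implicitly assumes the [Maz] argument carries over verbatim, which it does, so either of your suggested fixes would work but is more than the paper actually provides.
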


\begin{proof}
We only prove the first statement.  The proof of the second statement is similar.
Let $ T $ be an indecomposable tilting object in $ \mathcal{O}_{\lambda}(\mathfrak{gl}_n). $
Then again by [Maz], $ \theta_{\lambda}^0 T \cong T(w) $ for some $ w. $
Then $ \theta_0^{\lambda} \theta_{\lambda}^0 T \cong \oplus T \cong \theta_0^{\lambda} T(w). $ 
Since $ \theta_0^{\lambda} T(w) \cong T(w.\lambda) \oplus \oplus_i T_i $ where $ T_i $ are other indecomposable tilting modules, $ T_i \cong T \cong T(w.\lambda) $ for all $ i. $
\end{proof}

\subsection{Graded Category $ \mathcal{O} $}
In section ~\ref{3.1}, various aspects of the representation theory of $ \mathfrak{sl}_k $ were categorified.   The categorification of quantum groups is accomplished through graded representation theory.  We will treat category $ \mathcal{O} $ as a category of graded modules.  Then a shift in this grading descends to multiplication by $ q $ in the Grothendieck group: $ [M \langle 1 \rangle]=q[M]. $ The idea of graded category $ \mathcal{O} $ originates from [Soe1], [Soe3].  In [Str1], it was shown how to construct graded lifts of translation functors.  

\begin{define}
Let $ P_{\bf d} = \oplus_{x \in \mathbb{S}_n/\mathbb{S}_{\bf d}} P(x. \omega_{\bf d}) $ where $ {\bf d} = (d_{k-1}, \ldots, d_0), $ $ x $ is a minimal coset representative, and
$ \omega_{\bf d} $ which was defined in definition ~\ref{defblocks}.
\end{define}

Then $  P_{\bf d} $ is a minimal projective generator of $ \mathcal{O}_{\bf d}(\mathfrak{gl}_n). $
There is an equivalence of categories $ \mathcal{O}_{\bf d}(\mathfrak{gl}_n) \cong \text{mod}-A_{\bf d} $ where
$ A_{\bf d} = \End_{\mathfrak{g}}(P_{\bf d}) $ and $ \text{mod}-A_{\bf d} $ is the category of finitely generated right $ A_{\bf d}- $ modules.  We will interpret $ A_{\bf d} $ as a graded algebra.

The following lemma may be found in [Bass].

\begin{lemma}
\label{lemma28}
Let $ R $ and $ S $ be any rings.  There is an equivalence of categories:
$$ \lbrace \text{right exact functors compatible with direct sums}: (\text{mod-}R \rightarrow \text{mod-}S) \rbrace \rightarrow R\text{-mod-}S. $$
Under this equivalence a functor $ F $ gets mapped to $ F(R). $ In the other direction,
a bimodule $ X $ gets mapped to $ \bullet \otimes_{R} X. $
\end{lemma}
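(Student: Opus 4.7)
The plan is to use the classical Eilenberg--Watts argument. The two functors in question are $\Phi \colon F \mapsto F(R)$ and $\Psi \colon X \mapsto - \otimes_R X$, and I would show directly that $\Phi$ and $\Psi$ are quasi-inverse to each other.

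First I would verify that both constructions land where claimed. For $\Psi$, the functor $- \otimes_R X$ is right exact and preserves arbitrary direct sums by general properties of the tensor product. For $\Phi$, observe that $R$ is canonically an $(R,R)$-bimodule: the right $R$-structure makes $R$ an object of mod-$R$, and left multiplication by each $r \in R$ is a right $R$-linear endomorphism $\mu_r \colon R \to R$. Applying $F$, the maps $F(\mu_r)$ endow $F(R)$ with a left $R$-action that commutes with the right $S$-action coming from $F(R) \in \text{mod-}S$; this makes $F(R)$ into an $(R,S)$-bimodule.

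Next I would construct the natural isomorphisms comparing the compositions to the identity. The composition $\Psi \circ \Phi$ sends $X$ to $R \otimes_R X \cong X$ canonically, which handles one direction. For $\Phi \circ \Psi$, given $F$ I define a natural transformation $\eta_M \colon M \otimes_R F(R) \to F(M)$ as follows: for each $m \in M$, let $\varphi_m \colon R \to M$ be the right $R$-linear map sending $1$ to $m$; then set $\eta_M(m \otimes x) = F(\varphi_m)(x)$. One checks $\eta_M$ is well-defined on the balanced tensor product and natural in $M$, and that $\eta_R$ is the canonical isomorphism $R \otimes_R F(R) \cong F(R)$. Since both sides commute with direct sums, $\eta_{R^{(I)}}$ is an isomorphism for every free module. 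For an arbitrary $M$, pick a free presentation $R^{(J)} \to R^{(I)} \to M \to 0$; since $F$ and $- \otimes_R F(R)$ are both right exact and sum-preserving, applying each yields a right exact sequence, and the five lemma (or direct diagram chase) together with the free case forces $\eta_M$ to be an isomorphism.

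The argument is largely formal once the bimodule structure on $F(R)$ and the map $\eta$ are in place. The step that requires the most care is verifying that $\eta_M$ is well-defined on $M \otimes_R F(R)$ (i.e., $R$-balanced) and that $\eta$ is truly natural in $M$; these are bookkeeping but easy to botch because one must track the left $R$-action on $F(R)$ defined via $F(\mu_r)$ against the right $R$-action on $M$. Once that is in hand, the free presentation/right exactness step is the conceptual core but is automatic from the hypotheses on $F$.
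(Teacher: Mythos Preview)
Your argument is the standard Eilenberg--Watts proof and is correct. The paper does not actually supply a proof of this lemma: it merely cites Bass's \emph{Algebraic K-theory} and moves on. So there is no ``paper's approach'' to compare against; your self-contained treatment is already more than the paper offers, and it is exactly the argument one finds in Bass (and in any standard reference).

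One minor caveat worth flagging: in this paper the notation $\text{mod-}R$ is used elsewhere for \emph{finitely generated} right $R$-modules (see the discussion of $\text{mod-}A_{\bf d}$). Your presentation step $R^{(J)} \to R^{(I)} \to M \to 0$ tacitly allows infinite index sets, which in general takes you outside that category. In the paper's applications the rings are finite-dimensional algebras over $\mathbb{C}$, hence Noetherian, so finitely generated modules are finitely presented and your argument goes through with finite $I,J$. If you want a version that is literally correct for arbitrary rings with $\text{mod-}R$ read as finitely generated modules, you would need to add a Noetherian hypothesis or work instead with all $R$-modules; either way the content of the lemma as used in the paper is unaffected.
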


Let $ P(x.\omega_{\bf d}) $ be an indecomposable projective object in $ \mathcal{O}_{\bf d}(\mathfrak{gl}_n). $ Then $ Z^{\mathfrak{p}} P(x.\omega_{\bf d}) $ is either 0 or an indecomposable projective object in
$ \mathcal{O}_{\bf d}^{\mathfrak{p}} (\mathfrak{gl}_n), $ (see [Ro].) 

Denote by $ P_{\bf d}^{\mathfrak{p}} $ a minimal projective generator of $ \mathcal{O}_{\bf d}^{\mathfrak{p}} (\mathfrak{gl}_n) $. 
Let $ A_{\bf d}^{\mathfrak{p}} = \End(P_{\bf d}^{\mathfrak{p}}) $ be its endomorphism algebra.
Then there is an equivalence of categories
$ \mathcal{O}_{\bf d}^{\mathfrak{p}} (\mathfrak{gl}_n)  \cong \text{mod}-A_{\bf d}^{\mathfrak{p}} $.
Let $ S = S(\mathfrak{h}) $ be the symmetric algebra associated to the Cartan subalgebra.  
Let $ W = \mathbb{S}_n $ be the Weyl group.
Let $ C = S/S_{+}^{W} $ be the associated coinvariant algebra.
Let $ C^{\bf d} $ be the subalgebra invariant under $ W_{\bf d}. $  Let $ w_0^{\bf d} $ be the longest element in the set of shortest coset representatives of $ W/W_{\bf d}. $  There is the is a well known isomorphism due to Soergel [Soe1] between the endomorphism algebra of the indecomposable projective-injective module and this subalgebra of invariants: $ \End(P(w_{0}^{\bf d}.\omega_{\bf d})) \cong C^{\bf d}. $

\begin{define}
Let $ \mathbb{V}_{\bf d} \colon \mathcal{O}_{\bf d}(\mathfrak{gl}_n) \rightarrow \text{mod}-C^{\bf d} $ be the Soergel functor defined by
$ M \mapsto \Hom_{\mathfrak{g}}(P(w_{0}^{\bf d}.\omega_{\bf d}), M). $
\end{define}

Soergel showed that the functor $ \mathbb{V}_{\lambda} $ is fully faithful on projective objects [Soe1].  Suppose $ J \subset I $ as defined earlier.
Translation functors and Soergel functors are related by the following lemma originally proved by Soergel.

\begin{lemma}
\label{lemma29}
Let $ \text{Res}_{J}^{I} \colon \text{mod}-C^J \rightarrow \text{mod}-C^I $ denote the restriction functor.  Then
\begin{enumerate}
\item $ {\mathbb{V}}_J \theta_I^J \cong C^J \otimes_{C^I} {\mathbb{V}}_I. $
\item $ {\mathbb{V}}_I \theta_J^I \cong Res_J^I {\mathbb{V}}_J. $
\end{enumerate}
\end{lemma}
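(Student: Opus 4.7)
The plan is to deduce both isomorphisms by combining the biadjunction between the translation functors $\theta_I^J$ and $\theta_J^I$ with Soergel's identification $\End_{\mathfrak{g}}(P(w_0^{\bf d}.\omega_{\bf d})) \cong C^{\bf d}$ recalled just before the definition of $\mathbb{V}_{\bf d}$.

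For part (2), I would apply adjunction directly to the defining formula. With $\theta_I^J$ left adjoint to $\theta_J^I$ we obtain
\[ \mathbb{V}_I \theta_J^I M = \Hom_{\mathfrak{g}}(P(w_0^I.\omega_I), \theta_J^I M) \cong \Hom_{\mathfrak{g}}(\theta_I^J P(w_0^I.\omega_I), M). \]
The crucial input is the identification $\theta_I^J P(w_0^I.\omega_I) \cong P(w_0^J.\omega_J)$: translation of the big projective-injective from the more singular block $\mathcal{O}_I$ to the less singular block $\mathcal{O}_J$ yields the corresponding object there. Substituting gives $\mathbb{V}_J M$ as an abelian group, and under the adjunction isomorphism the left $C^I$-action on the source is transported to the action of $C^J$ on $\mathbb{V}_J M$ restricted along the ring map $C^I = \End(P(w_0^I.\omega_I)) \to \End(P(w_0^J.\omega_J)) = C^J$ induced by functoriality of $\theta_I^J$; since $W_J \subset W_I$ gives $C^I \subset C^J$, this map is the canonical inclusion, and the restricted action is exactly $\Res_J^I \mathbb{V}_J M$.

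For part (1), I would invoke Lemma \ref{lemma28}. Both $\mathbb{V}_J \theta_I^J$ and $C^J \otimes_{C^I} \mathbb{V}_I$ are right exact functors $\mathcal{O}_I \to \text{mod-}C^J$ compatible with direct sums, so they are determined by their associated bimodules. A natural transformation between them is built from the identification $\theta_I^J P(w_0^I.\omega_I) \cong P(w_0^J.\omega_J)$ and functoriality, and one checks it is an isomorphism on $M = P(w_0^I.\omega_I)$, where both sides reduce to $C^J$ (on the left, $C^J \otimes_{C^I} C^I \cong C^J$; on the right, $\mathbb{V}_J P(w_0^J.\omega_J) \cong C^J$). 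Right exactness together with Soergel's full faithfulness on projectives then propagate the isomorphism from the projective generator to all of $\mathcal{O}_I$.

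The main obstacle is to justify the identification $\theta_I^J P(w_0^I.\omega_I) \cong P(w_0^J.\omega_J)$ and to confirm that the induced endomorphism map coincides with the inclusion $C^I \hookrightarrow C^J$ of coinvariants. Both assertions are contained in Soergel's Endomorphismensatz and Struktursatz [Soe1], but one must carefully track the module structures through the adjunction isomorphisms to verify that the actions transported from one side match those on the other.
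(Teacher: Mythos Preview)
The paper does not actually prove this lemma; it simply cites Proposition~3.3 of [FKS], which in turn rests on Soergel's Endomorphismensatz and Struktursatz in [Soe1]. So there is no in-paper argument to compare against, and your sketch is essentially the argument behind those references.

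Your treatment of part~(2) is correct and complete: the adjunction, the identification $\theta_I^J P(w_0^I.\omega_I)\cong P(w_0^J.\omega_J)$, and the fact that the induced map on endomorphism rings is the inclusion $C^I\hookrightarrow C^J$ are exactly the ingredients from [Soe1].

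For part~(1) there is a genuine gap. You construct a natural transformation and verify it on $P(w_0^I.\omega_I)$, then speak of propagating from ``the projective generator''. But $P(w_0^I.\omega_I)$ is \emph{not} a projective generator of $\mathcal{O}_I$; the minimal projective generator is $P_I=\bigoplus_x P(x.\omega_I)$, and Lemma~\ref{lemma28} requires the value on all of $P_I$. Soergel's full faithfulness of $\mathbb{V}$ on projectives does not by itself close this gap: it controls morphisms between projectives, not whether a natural transformation which is an isomorphism on one indecomposable projective is an isomorphism on all of them. A clean repair is to note that both functors are in fact \emph{exact} (translation functors and $\mathbb{V}$ are exact, and $C^J$ is free over $C^I$), so it suffices to check your transformation on any generating family; one then uses the Struktursatz description of $\mathbb{V}$ on every indecomposable projective, or equivalently reduces to the regular block where $\mathbb{V}_0 P(x.0)$ is an explicit iterated tensor product over the coinvariants and compatibility with translation is immediate. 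That is how the argument is organised in [Soe1] and [FKS].
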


\begin{proof}
This is proposition 3.3 of [FKS].
\end{proof}

This lemma together with the fact that $ \mathbb{V}_{\bf d} $ is a faithful functor on projective objects allows us to consider the endomorphism ring of a minimal projective generator of $ \mathcal{O} $ as a graded ring [Str1].  A projective object is a direct summand of a sequence of projective functors applied to a dominant Verma module.  Thus by the previous lemma, a projective object $ P, $ $ \mathbb{V}_{\bf d} P $ becomes a graded $ C^{\bf d}- $ module.  Then $ \End(\mathbb{V}_{\bf d} P) $ becomes a graded ring so there is a grading on $ A_{\bf d}. $

In [Maz] it was shown that $ A_{\bf d}^{\mathfrak{p}} $ is a graded quotient of $ A_{\bf d}. $
Now we can consider $ \mathcal{O}_{\bf d}(\mathfrak{gl}_n) $ and  $ \mathcal{O}_{\bf d}^{\mathfrak{p}} (\mathfrak{gl}_n) $ as graded categories by considering
$ \text{gmod}-A_{\bf d} $ and $ \text{gmod}-A_{\bf d}^{\mathfrak{p}}  $ respectively.

We fix  a graded lift of a generalized Verma module $ \widetilde{M^{\mathfrak{p}}}(x.\omega_{\bf d}) $ so that its head is concentrated in degree zero.
The proof of proposition 5.2 of [FKS] gives a graded inclusion $ \widetilde{M}(x.\omega_{\bf d})\langle l(x) \rangle \rightarrow \widetilde{M}(\omega_{\bf d}) $ of Verma modules.  This implies that the standard maps in a generalized BGG resolution are homogeneous of degree 1.

Define the k-tuple $ {\bf d} + t_0\epsilon_0 + \cdots + t_{k-1}\epsilon_{k-1} $ to be $ (d_{k-1}+t_{k-1}, \ldots, d_{0}+t_{0}) $ where the $ t_i $ are integers.
Assume $ t>0. $ 
Define $ ({\bf d}; {\bf d} + t \epsilon_i- t \epsilon_{i-1}) $ to be the (k+1)-tuple $ (d_{k-1}, \ldots, d_i, t, d_{i-1}-t, d_{i-2}, \ldots, d_0). $
Define $ ({\bf d}; {\bf d} - t \epsilon_i+ t \epsilon_{i-1}) $ to be the (k+1)-tuple $ (d_{k-1}, \ldots, d_i-t, t, d_{i-1}, d_{i-2}, \ldots, d_0). $

Recall the definitions of $ \mathcal{E}_i, \mathcal{F}_i $ from section ~\ref{3.1}.
\begin{lemma}
\label{lemma30}
\begin{enumerate}
\item $ \mathcal{E}_{i} \colon \mathcal{O}_{\bf d} \rightarrow \mathcal{O}_{{\bf d} +\epsilon_i - \epsilon_{i-1}} $ is isomorphic to 
$ \theta_{{\bf d}; {\bf d} + \epsilon_i- \epsilon_{i-1}}^{{\bf d} + \epsilon_i- \epsilon_{i-1}} \theta^{{\bf d}; {\bf d} + \epsilon_i- \epsilon_{i-1}}_{\bf d}. $
\item $ \mathcal{F}_{i} \colon \mathcal{O}_{\bf d} \rightarrow \mathcal{O}_{{\bf d} -\epsilon_i + \epsilon_{i-1}} $ is isomorphic to 
$ \theta_{{\bf d}; {\bf d} - \epsilon_i+ \epsilon_{i-1}}^{{\bf d} - \epsilon_i+ \epsilon_{i-1}} \theta^{{\bf d}; {\bf d} - \epsilon_i+ \epsilon_{i-1}}_{\bf d}. $
\end{enumerate}
\end{lemma}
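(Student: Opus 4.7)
The plan is to invoke the Bernstein--Gelfand classification of projective functors between blocks of $\mathcal{O}(\mathfrak{gl}_n)$: a projective functor from $\mathcal{O}_{\lambda}$ to $\mathcal{O}_{\mu}$ is determined up to isomorphism by its value on the dominant Verma $M(\lambda)$, since tensoring a dominant Verma with a finite-dimensional module yields a projective object and the correspondence respects decomposition into indecomposables. Both $\mathcal{E}_i$ and the composition $\theta^{{\bf d}+\epsilon_i-\epsilon_{i-1}}_{({\bf d};\,{\bf d}+\epsilon_i-\epsilon_{i-1})}\theta^{({\bf d};\,{\bf d}+\epsilon_i-\epsilon_{i-1})}_{\bf d}$ are projective functors from $\mathcal{O}_{\bf d}$ to $\mathcal{O}_{{\bf d}+\epsilon_i-\epsilon_{i-1}}$ — the first tautologically, the second as a composition of translation functors — so it suffices to check that they produce isomorphic objects when applied to $M(\omega_{\bf d})$. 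I would prove (1); statement (2) is identical with $V_{n-1}^{*}$ replacing $V_{n-1}$.

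For $\mathcal{E}_i$, I would argue that $V_{n-1}\otimes M(\omega_{\bf d})$ carries a Verma flag with subquotients $M(\omega_{\bf d}+e_j^{*})$ for $j=1,\dots,n$, and that projection onto the block $\mathcal{O}_{{\bf d}+\epsilon_i-\epsilon_{i-1}}$ retains precisely those indices $j$ for which the $j$-th coordinate of $\omega_{\bf d}+\rho$ equals $i-1$ — a total of $d_{i-1}$ Verma subquotients. For the composition, the intermediate block $\mathcal{O}_{({\bf d};\,{\bf d}+\epsilon_i-\epsilon_{i-1})}$ corresponds to a regular refinement of $\omega_{\bf d}$ in which one of the coordinates of value $i-1$ is relabeled to a fresh distinct level lying between $i-1$ and $i$. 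Translation into this more regular block sends $M(\omega_{\bf d})$ to a Verma module there, and the return translation, which identifies the fresh level with $i$, produces a projective module whose Verma flag has $d_{i-1}$ subquotients indexed by the same coordinates.

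The main obstacle will be the combinatorial book-keeping at the intermediate regular block: one must verify that the chosen translation pair really picks out the ``change an $(i-1)$-coordinate into an $i$-coordinate'' direction rather than another direction permitted by the stabilizer structure $\mathbb{S}_{\bf d}$. Once this is handled by tracking minimal coset representatives of $\mathbb{S}_n / \mathbb{S}_{\bf d}$ under the dot action, the resulting agreement of Verma flags on the two sides, together with the Bernstein--Gelfand classification, gives the isomorphism of functors.
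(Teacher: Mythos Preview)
The paper does not give an argument for this lemma at all; its entire proof is the citation ``See proposition 3.2b of [FKS].'' Your proposal, by contrast, sketches the standard proof via the Bernstein--Gelfand classification of projective functors, which is almost certainly what underlies the cited result. The strategy---reduce to comparing the two functors on the dominant Verma $M(\omega_{\bf d})$ and match the resulting Verma flags---is correct and is exactly how one proves such statements.

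One small correction to your description of the intermediate step: you write that translation into the more regular block $({\bf d};{\bf d}+\epsilon_i-\epsilon_{i-1})$ sends $M(\omega_{\bf d})$ to a single Verma module and that the \emph{return} translation produces the $d_{i-1}$ Verma subquotients. It is the other way around. Going from $\mathcal{O}_{\bf d}$ to the intermediate block is translation \emph{off} the wall (the stabilizer shrinks from $\mathbb{S}_{d_{i-1}}$ to $\mathbb{S}_1\times\mathbb{S}_{d_{i-1}-1}$ in the relevant factor), and this step already sends $M(\omega_{\bf d})$ to a projective object with a Verma flag of length $d_{i-1}$, indexed by the shortest coset representatives of $\mathbb{S}_{d_{i-1}}/(\mathbb{S}_1\times\mathbb{S}_{d_{i-1}-1})$. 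The second translation, from the intermediate block to $\mathcal{O}_{{\bf d}+\epsilon_i-\epsilon_{i-1}}$, is translation \emph{onto} a wall and sends each Verma to a single Verma. The net effect---$d_{i-1}$ Verma subquotients, one for each coordinate equal to $i-1$---is exactly as you state, so the comparison with $\mathrm{proj}_{{\bf d}+\epsilon_i-\epsilon_{i-1}}(V_{n-1}\otimes M(\omega_{\bf d}))$ goes through and Bernstein--Gelfand finishes the proof.
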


\begin{proof}
See proposition 3.2b of [FKS].
\end{proof}
 
Now we are prepared to introduce the graded lifts $ \widetilde{\mathcal{E}}_i, $ $ \widetilde{\mathcal{F}}_i, $ $ \widetilde{\mathcal{H}}, $ and $ \widetilde{\mathcal{H}}^{-1}. $
By lemma ~\ref{lemma28}, 
$$ \mathcal{E}_i \colon \text{mod}-A_{\bf d} \rightarrow \text{mod}-A_{{\bf d}+\epsilon_i-\epsilon_{i-1}} $$ is given by
$$ \bullet \otimes_{\text{mod}-A_{\bf d}} \Hom_{\mathfrak{gl}_n}(P_{{\bf d} + \epsilon_i- \epsilon_{i-1}}, \mathcal{E}_i P_{\bf d}). $$
By lemma ~\ref{lemma30} this is
$$ \bullet \otimes_{\text{mod}-A_{\bf d}} \Hom_{\mathfrak{gl}_n}(P_{{\bf d} + \epsilon_i- \epsilon_{i-1}}, 
\theta_{{\bf d}; {\bf d} + \epsilon_i- \epsilon_{i-1}}^{{\bf d} + \epsilon_i- \epsilon_{i-1}} \theta^{{\bf d}; {\bf d} + \epsilon_i- \epsilon_{i-1}}_{\bf d}P_{\bf d}). $$
Then by lemma ~\ref{lemma29} this is isomorphic to
$$ \bullet \otimes_{\text{gmod}-A_{\bf d}} \Hom_{C^{{\bf d} + \epsilon_i- \epsilon_{i-1}}}({\mathbb{V}}_{{\bf d} + \epsilon_i- \epsilon_{i-1}}P_{{\bf d} + \epsilon_i- \epsilon_{i-1}},
\text{Res}^{{\bf d} + \epsilon_i- \epsilon_{i-1}}_{{\bf d}; {{\bf d} + \epsilon_i- \epsilon_{i-1}}} C^{{\bf d}; {{\bf d} + \epsilon_i- \epsilon_{i-1}}} \otimes_{C^{\bf d}} 
{\mathbb{V}}_{\bf d}P_{\bf d}). $$

\begin{define}
\begin{enumerate}
\item Let 
$ \widetilde{\mathcal{E}}_i = $
$$ \bullet \otimes_{\text{gmod}-A_{\bf d}} \Hom_{C^{{\bf d} + \epsilon_i- \epsilon_{i-1}}}({\mathbb{V}}_{{\bf d} + \epsilon_i- \epsilon_{i-1}}P_{{\bf d} + \epsilon_i- \epsilon_{i-1}},
\text{Res}^{{\bf d} + \epsilon_i- \epsilon_{i-1}}_{{\bf d}; {{\bf d} + \epsilon_i- \epsilon_{i-1}}} C^{{\bf d}; {{\bf d} + \epsilon_i- \epsilon_{i-1}}} \otimes_{C^{\bf d}} 
{\mathbb{V}}_{\bf d}P_{\bf d}\langle 1-d_{i-1} \rangle). $$

\item Let 
$ \widetilde{\mathcal{F}}_i = $
$$ \bullet \otimes_{\text{gmod}-A_{\bf d}} \Hom_{C^{{\bf d} - \epsilon_i+ \epsilon_{i-1}}}({\mathbb{V}}_{{\bf d} - \epsilon_i+ \epsilon_{i-1}}P_{{\bf d} - \epsilon_i+ \epsilon_{i-1}},
\text{Res}^{{\bf d} - \epsilon_i+ \epsilon_{i-1}}_{{\bf d}; {{\bf d} - \epsilon_i+ \epsilon_{i-1}}} C^{{\bf d}; {{\bf d} - \epsilon_i+ \epsilon_{i-1}}} \otimes_{C^{\bf d}} 
{\mathbb{V}}_{\bf d}P_{\bf d}\langle 1-d_i \rangle). $$

\item $ \widetilde{\mathcal{H}} = \Id\langle d_i-d_{i-1} \rangle. $

\item $ \widetilde{\mathcal{H}}^{-1} = \Id\langle -(d_i-d_{i-1}) \rangle. $
\end{enumerate}
\end{define}

Note that it is possible to define graded lifts of functors categorifying divided powers $ \widetilde{\mathcal{E}}_i^{(t)}, \widetilde{\mathcal{F}}_i^{(t)}. $

\begin{theorem}
There are isomorphisms of graded projective functors:
\begin{enumerate}
\item $ \widetilde{\mathcal{H}}_i \widetilde{\mathcal{H}}_i^{-1} \cong \Id \cong \widetilde{\mathcal{H}}^{-1}_i \widetilde{\mathcal{H}}_i. $
\item $ \widetilde{\mathcal{H}}_i \widetilde{\mathcal{H}}_j \cong \widetilde{\mathcal{H}}_j \widetilde{\mathcal{H}}_i. $
\item If $ |i-j|>1, $ $ \widetilde{\mathcal{H}}_i \widetilde{\mathcal{E}}_j \cong \widetilde{\mathcal{E}}_j \widetilde{\mathcal{H}}_i. $
\item  $ \widetilde{\mathcal{H}}_i \widetilde{\mathcal{E}}_i \cong \widetilde{\mathcal{E}}_i \widetilde{\mathcal{H}}_i \langle 2 \rangle. $
\item If $ |i-j|=1, $ $ \widetilde{\mathcal{H}}_i \widetilde{\mathcal{E}}_j \cong \widetilde{\mathcal{E}}_j \widetilde{\mathcal{H}}_i \langle -1 \rangle. $
\item If $ |i-j|>1, $ $ \widetilde{\mathcal{H}}_i \widetilde{\mathcal{F}}_j \cong \widetilde{\mathcal{F}}_j \widetilde{\mathcal{H}}_i. $
\item  $ \widetilde{\mathcal{H}}_i \widetilde{\mathcal{F}}_i \cong \widetilde{\mathcal{F}}_i \widetilde{\mathcal{H}}_i \langle -2 \rangle. $
\item If $ |i-j|=1, $ $ \widetilde{\mathcal{H}}_i \widetilde{\mathcal{F}}_j \cong \widetilde{\mathcal{F}}_j \widetilde{\mathcal{H}}_i \langle 1 \rangle. $
\item If $ i \neq j, $ $ \widetilde{\mathcal{E}}_i \widetilde{\mathcal{F}}_j \cong \widetilde{\mathcal{F}}_j \widetilde{\mathcal{E}}_i. $
\item If $ |i-j|>1, $ $ \widetilde{\mathcal{E}}_i \widetilde{\mathcal{E}}_j \cong \widetilde{\mathcal{E}}_j \widetilde{\mathcal{E}}_i. $
\item If $ |i-j|>1, $ $ \widetilde{\mathcal{F}}_i \widetilde{\mathcal{F}}_j \cong \widetilde{\mathcal{F}}_j \widetilde{\mathcal{F}}_i. $
\item $ \widetilde{\mathcal{E}}_i \widetilde{\mathcal{E}}_i \widetilde{\mathcal{E}}_{i+1} \oplus \widetilde{\mathcal{E}}_{i+1} \widetilde{\mathcal{E}}_i
\widetilde{\mathcal{E}}_i \cong \widetilde{\mathcal{E}}_i \widetilde{\mathcal{E}}_{i+1} \widetilde{\mathcal{E}}_i \langle 1 \rangle \oplus
\widetilde{\mathcal{E}}_i \widetilde{\mathcal{E}}_{i+1} \widetilde{\mathcal{E}}_i \langle -1 \rangle. $
\item $ \widetilde{\mathcal{E}}_i \widetilde{\mathcal{E}}_i \widetilde{\mathcal{E}}_{i-1} \oplus \widetilde{\mathcal{E}}_{i-1} \widetilde{\mathcal{E}}_i
\widetilde{\mathcal{E}}_i \cong \widetilde{\mathcal{E}}_i \widetilde{\mathcal{E}}_{i-1} \widetilde{\mathcal{E}}_i \langle 1 \rangle \oplus
\widetilde{\mathcal{E}}_i \widetilde{\mathcal{E}}_{i-1} \widetilde{\mathcal{E}}_i \langle -1 \rangle. $
\item $ \widetilde{\mathcal{F}}_i \widetilde{\mathcal{F}}_i \widetilde{\mathcal{F}}_{i+1} \oplus \widetilde{\mathcal{F}}_{i+1} \widetilde{\mathcal{F}}_i
\widetilde{\mathcal{F}}_i \cong \widetilde{\mathcal{F}}_i \widetilde{\mathcal{F}}_{i+1} \widetilde{\mathcal{F}}_i \langle 1 \rangle \oplus
\widetilde{\mathcal{F}}_i \widetilde{\mathcal{F}}_{i+1} \widetilde{\mathcal{F}}_i \langle -1 \rangle. $
\item $ \widetilde{\mathcal{F}}_i \widetilde{\mathcal{F}}_i \widetilde{\mathcal{F}}_{i-1} \oplus \widetilde{\mathcal{F}}_{i-1} \widetilde{\mathcal{F}}_i
\widetilde{\mathcal{F}}_i \cong \widetilde{\mathcal{F}}_i \widetilde{\mathcal{F}}_{i-1} \widetilde{\mathcal{F}}_i \langle 1 \rangle \oplus
\widetilde{\mathcal{F}}_i \widetilde{\mathcal{F}}_{i-1} \widetilde{\mathcal{F}}_i \langle -1 \rangle. $
\item If $ d_{i-1} > d_i $ then
$$ \widetilde{\mathcal{F}}_i \widetilde{\mathcal{E}}_i \cong \widetilde{\mathcal{E}}_i \widetilde{\mathcal{F}}_i 
\oplus_{r=0}^{d_{i-1}-d_i-1} \Id \langle d_{i-1}-d_i-1-2r \rangle. $$
If $ d_{i} > d_{i-1} $ then
$$ \widetilde{\mathcal{E}}_i \widetilde{\mathcal{F}}_i \cong \widetilde{\mathcal{F}}_i \widetilde{\mathcal{E}}_i 
\oplus_{r=0}^{d_{i}-d_{i-1}-1} \Id \langle d_{i}-d_{i-1}-1-2r \rangle. $$		
If $ d_i = d_{i-1}, $ then $ \widetilde{\mathcal{F}}_i \widetilde{\mathcal{E}}_i \cong \widetilde{\mathcal{E}}_i \widetilde{\mathcal{F}}_i. $
\end{enumerate}
\end{theorem}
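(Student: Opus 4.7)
The plan is to reduce each of the sixteen isomorphisms to a graded-bimodule identity over coinvariant algebras and to pin down the grading shift by a single-module computation. The ungraded versions of all these isomorphisms are already established in [BFK] and [Su] as the content of the projective-functor $\mathfrak{sl}_k$-action on $\bigoplus_{\bf d}\mathcal{O}_{\bf d}(\mathfrak{gl}_n)$; only the grading shifts remain to be checked. By Lemma~\ref{lemma28}, each composite on either side of an isomorphism corresponds to tensoring with a graded bimodule, and by Lemma~\ref{lemma29} the Soergel functor $\mathbb V_{\bf d}$ translates these bimodules into explicit iterated induction-restriction bimodules among the coinvariant algebras $C^\bullet$, with the shifts $\langle 1-d_{i-1}\rangle$, $\langle 1-d_i\rangle$, $\langle d_i-d_{i-1}\rangle$ from the definitions of $\widetilde{\mathcal E}_i,\widetilde{\mathcal F}_i,\widetilde{\mathcal H}_i$ carried along. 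Since graded lifts of indecomposable projective functors are unique up to an overall shift and $\mathbb V$ is fully faithful on projectives, it suffices to verify each isomorphism after applying $\mathbb V_{\bf d}$ and to match the overall shift on a single test module, e.g.\ the graded dominant Verma $\widetilde M(\omega_{\bf d})$ normalized so that its head lies in degree zero.

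Most of the relations then follow quickly. Items (1)-(2) are immediate from the definition of $\widetilde{\mathcal H}_i^{\pm 1}$. Items (3)-(8) amount to tracking how the shift $\langle d_i-d_{i-1}\rangle$ in $\widetilde{\mathcal H}_i$ transforms when $d_i$ and $d_{i-1}$ are modified by $\pm 1$ via an application of $\widetilde{\mathcal E}_j$ or $\widetilde{\mathcal F}_j$; the stated corrections $\langle 2\rangle, \langle -1\rangle, \langle -2\rangle, \langle 1\rangle$ drop straight out of the definitions. Items (9)-(11) express commutativity of induction-restriction bimodules over disjoint parabolic subgroups of $\mathbb S_n$, and the grading shifts on the two sides agree automatically because they are sums of the same $\langle 1-d_\bullet\rangle$ factors. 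Items (12)-(15) are the graded Serre relations and follow from the corresponding graded-projective-functor Serre decomposition established in [FKS], combined with a shift check on $\widetilde M(\omega_{\bf d})$.

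The main obstacle is item (16), the graded $\mathfrak{sl}_2$ commutator. For this I would compute the bimodules $\mathbb V_{\bf d}(\widetilde{\mathcal E}_i\widetilde{\mathcal F}_i P_{\bf d})$ and $\mathbb V_{\bf d}(\widetilde{\mathcal F}_i\widetilde{\mathcal E}_i P_{\bf d})$ explicitly as graded tensor products of $\Res$-induction bimodules between $C^{\bf d}$ and $C^{{\bf d}\pm\epsilon_i\mp\epsilon_{i-1}}$ through the intermediate algebras $C^{{\bf d};{\bf d}\pm\epsilon_i\mp\epsilon_{i-1}}$. A Mackey-type decomposition of these bimodules yields one copy of the complementary composite plus identity-bimodule summands; the hard part is verifying that, once the shifts $\langle 1-d_{i-1}\rangle$ from $\widetilde{\mathcal E}_i$ and $\langle 1-d_i\rangle$ from $\widetilde{\mathcal F}_i$ are combined with the internal grading of the coinvariant bimodules, the identity summands appear with precisely the symmetric shifts $d_{i-1}-d_i-1-2r$ for $r=0,\ldots,|d_{i-1}-d_i|-1$. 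This is the graded categorification of $[E_i,F_i]=(K_i-K_i^{-1})/(q-q^{-1})$ on the weight space, producing the $q$-integer $[d_{i-1}-d_i]$, and is essentially the computation in [FKS] transported to the parabolic-singular blocks relevant here.
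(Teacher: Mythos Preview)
The paper does not prove this theorem; its entire proof is the single line ``See [Su].'' So there is nothing to compare at the level of argument: you have supplied a sketch where the paper supplies only a citation. Your outline is broadly the right shape---reduce to Soergel bimodules via $\mathbb V$, invoke uniqueness of graded lifts of indecomposable projective functors, and pin down the shift on a test object---and this is indeed the strategy carried out in [Su] and in the $\mathfrak{sl}_2$ precursor [FKS].

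One genuine slip: you attribute the graded Serre relations (items (12)--(15)) to [FKS], but [FKS] treats only $\mathcal{U}_q(\mathfrak{sl}_2)$, where there are no Serre relations at all. The graded Serre relations for $\mathfrak{sl}_k$ are precisely the new content of [Su] beyond [FKS], so you cannot import them from [FKS]; you must either cite [Su] directly (as the paper does) or actually perform the coinvariant-bimodule computation you allude to. Similarly, for item (16) you say it is ``essentially the computation in [FKS] transported to the parabolic-singular blocks relevant here''; that transport is not automatic, since the intermediate algebras $C^{{\bf d};{\bf d}\pm\epsilon_i\mp\epsilon_{i-1}}$ depend on the full $k$-tuple ${\bf d}$ and not just on a single pair $(d_i,d_{i-1})$. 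The Mackey-type decomposition you describe is correct in spirit, but making it precise in this setting is again the work done in [Su], not in [FKS].

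In short: your plan is sound but your citations are off, and the parts you defer to [FKS] are exactly the parts that require [Su]. Since the paper itself simply cites [Su], your proposal is more explicit than the paper but ultimately rests on the same external reference.
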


\begin{proof}
See [Su].
\end{proof}

The Zuckerman functor $ Z^{\mathfrak{p}} $ also becomes a graded functor $ \widetilde{Z}^{\mathfrak{p}}. $  It is the functor of tensoring with the graded bimodule $ A_{\bf d}^{\mathfrak{p}}\langle \dim(\mathfrak{b})-\dim(\mathfrak{p}) \rangle. $

\begin{prop}
\label{zuckgroth}
Let $ a_{i_1, \ldots, i_r} = \dim(\mathfrak{b})-\dim(\mathfrak{p}). $  Then on the Grothendieck group, 
\begin{enumerate}
\item $ [L \widetilde{Z}^{(i_1, \ldots, i_r)}[a_{i_1, \ldots, i_r}]]=\pi_{i_1} \otimes \cdots \otimes \pi_{i_r} $  
\item $ [\widetilde{\epsilon}_{(i_1, \ldots, i_r)}]=\iota_{i_1} \otimes \cdots \otimes \iota_{i_r}. $
\end{enumerate}
\end{prop}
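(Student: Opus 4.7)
The plan is to verify both identities on the standard bases of the two Grothendieck groups---Verma classes $\{[\widetilde{M}(\mu)]\}$ of $[\mathcal{O}_{\bf d}(\mathfrak{gl}_n)]$ and generalized Verma classes $\{[\widetilde{M^{\mathfrak{p}}}(\lambda)]\}$ of $[\mathcal{O}^{\mathfrak{p}}_{\bf d}(\mathfrak{gl}_n)]$---by computing each functor explicitly on these generators and matching signs and $q$-powers with the formulas defining $\iota$ and $\pi$. Throughout I use the identifications $[\widetilde{M}(a_1,\ldots,a_n)]\leftrightarrow v_{a_1}\otimes\cdots\otimes v_{a_n}$ and $[\widetilde{M^{\mathfrak{p}}}(\lambda)]\leftrightarrow (v_{a_1}\wedge\cdots\wedge v_{a_{i_1}})\otimes\cdots$ from Propositions 1 and 2.

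For part (2), since $\widetilde{\epsilon}_{\mathfrak{p}}$ is exact, it suffices to identify $[\widetilde{\epsilon}_{\mathfrak{p}}\widetilde{M^{\mathfrak{p}}}(\lambda)]$. The graded BGG-type resolution
\begin{equation*}
\cdots\to\bigoplus_{\substack{w\in\mathbb{S}_{\bf i}\\ l(w)=k}}\widetilde{M}(w\cdot\lambda)\langle k\rangle\to\cdots\to\widetilde{M}(\lambda)\to\widetilde{M^{\mathfrak{p}}}(\lambda)\to 0
\end{equation*}
yields $[\widetilde{\epsilon}_{\mathfrak{p}}\widetilde{M^{\mathfrak{p}}}(\lambda)] = \sum_{w\in\mathbb{S}_{\bf i}}(-1)^{l(w)}q^{l(w)}[\widetilde{M}(w\cdot\lambda)]$ in the graded Grothendieck group. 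The internal shifts $\langle l(w)\rangle$ come from the homogeneous-degree-one inclusions $\widetilde{M}(x\cdot\omega)\langle l(x)\rangle\hookrightarrow\widetilde{M}(\omega)$ already recorded in the paper (via Proposition 5.2 of [FKS]). Since $\mathbb{S}_{\bf i}$ factors as $\mathbb{S}_{i_1}\times\cdots\times\mathbb{S}_{i_r}$ with additive length, this sum is exactly $\iota_{i_1}\otimes\cdots\otimes\iota_{i_r}$ applied to the corresponding wedge.

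For part (1), I compute $[L\widetilde{Z}^{\mathfrak{p}}\widetilde{M}(\mu)]$ in two cases. If $\mu$ has two equal $a$-coordinates within a single parabolic block, then $\pi$ kills $v_\mu$ by antisymmetry; the cleanest way to show $[L\widetilde{Z}^{\mathfrak{p}}\widetilde{M}(\mu)]=0$ is to mimic the vanishing argument of Corollary \ref{zucktilt}, realizing $\widetilde{M}(\mu)$ as a summand in the image of a composition of translation functors applied to a Verma in the principal block, then using the commutativity of $L\widetilde{Z}^{\mathfrak{p}}$ with translations together with the vanishing $\theta_j \widetilde{M^{\mathfrak{p}}}(w_0^{\bf i}\cdot\omega)=0$ of [CI] for $s_j\in\mathbb{S}_{\bf i}$. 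Otherwise $\mu=w\cdot\lambda$ with $\lambda$ strictly $\mathfrak{p}$-dominant and $w\in\mathbb{S}_{\bf i}$, and I induct on $l(w)$ to prove
\begin{equation*}
[L\widetilde{Z}^{\mathfrak{p}}\widetilde{M}(w\cdot\lambda)]=(-1)^{l(w)}q^{l(w)-\sum_s\binom{i_s}{2}}[\widetilde{M^{\mathfrak{p}}}(\lambda)].
\end{equation*}
The base case is a graded refinement of the Enright-Shelton isomorphism $LZ^{\mathfrak{p}}M(w_0)\cong M^{\mathfrak{p}}(w_0^{\bf i})[l(w_0^{\bf i})]$ already invoked in Corollary \ref{zucktilt}, with the internal shift forced by the definition $\widetilde{Z}^{\mathfrak{p}}=\bullet\otimes A^{\mathfrak{p}}\langle\dim\mathfrak{b}-\dim\mathfrak{p}\rangle$; the inductive step applies $L\widetilde{Z}^{\mathfrak{p}}$ to the BGG inclusion $\widetilde{M}(sw\cdot\lambda)\langle 1\rangle\hookrightarrow\widetilde{M}(w\cdot\lambda)$ (for an appropriate simple reflection $s\in\mathbb{S}_{\bf i}$) and reads off the result from the resulting long exact sequence. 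Applying the homological shift $[a_{i_1,\ldots,i_r}]=[-\sum_s\binom{i_s}{2}]$ produces exactly the $(-1)^{l(w)-\sum_s\binom{i_s}{2}}q^{l(w)-\sum_s\binom{i_s}{2}}$ required to match $\pi_{i_1}\otimes\cdots\otimes\pi_{i_r}$ on $v_\mu$.

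The principal obstacle is the combined bookkeeping in part (1): pinning down the precise internal grading shift in the graded Enright-Shelton base case (coming from the $\langle\dim\mathfrak{b}-\dim\mathfrak{p}\rangle$ twist in $\widetilde{Z}^{\mathfrak{p}}$) and propagating the formula through the inductive BGG short exact sequences while simultaneously tracking both the homological sign $(-1)^k$ from the derived degree and the $q^k$ factor from the internal shift. Once the graded base case is established, the inductive step is a routine long-exact-sequence manipulation, but verifying that the sign and $q$-power combine cleanly is where the delicate work lies.
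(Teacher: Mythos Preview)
Your argument for part (2) is exactly the paper's: both use the graded BGG resolution of a generalized Verma module, with the internal shifts read off from the degree-one inclusions of Verma modules.

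For part (1) the paper proceeds quite differently and more economically. It simply quotes Proposition~5.5 of [ES], which already gives the full ungraded statement
\[
LZ^{(i_1,\ldots,i_r)}M(a_1,\ldots,a_n)\;\cong\;M^{(i_1,\ldots,i_r)}(a_{j_1},\ldots,a_{j_n})[l(w)]\quad\text{or}\quad 0,
\]
covering both the non-vanishing and vanishing cases at once, and then upgrades to the graded statement in one sentence by invoking the Koszulity of Verma modules (a linear projective resolution forces the internal shift to match the homological shift). This is exactly what handles the ``combined bookkeeping'' you flag as the principal obstacle: Koszulity replaces your induction and makes the matching of the $(-1)^{l(w)}$ and $q^{l(w)}$ factors automatic.

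Your vanishing argument has a genuine gap. The mechanism of Corollary~\ref{zucktilt} works because an indecomposable tilting module $T(ww_0)$ is a \emph{summand of wall-crossing functors} $\theta_{k_b}\cdots\theta_{j_1}$ applied to the antidominant Verma $M(w_0)$, and then the input $\theta_{j_1}M^{\mathfrak{p}}(w_0^{\bf i})=0$ from [CI] kills everything. Verma modules do not arise this way: a Verma $M(\mu)$ in a singular block is obtained from a principal-block Verma via translation \emph{onto} the wall $\theta_0^{\bf d}$, not by a composition of wall-crossings $\theta_j$; applying $\theta_j$ to $M(w_0)$ already produces a tilting module, not a Verma. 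So the specific vanishing $\theta_j M^{\mathfrak{p}}(w_0^{\bf i}\cdot\omega)=0$ is not the relevant input, and the Corollary~\ref{zucktilt} template does not transfer. The cleanest fix is precisely the paper's: cite [ES] for the vanishing (it is already part of Proposition~5.5 there), rather than trying to reprove it.
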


\begin{proof}
\begin{enumerate}
\item This follows from computing the derived Zuckerman functor on Verma modules.
By proposition 5.5 of [ES], 
$$ LZ^{(i_1, \ldots, i_r)} M(a_1, \ldots, a_n) \cong M^{(i_1, \ldots, i_r)}(a_{j_1}, \ldots, a_{j_n})[l(w)], $$
or zero where 
$$ a_{j_1}>\cdots > a_{j_{i_1}}, \ldots, a_{j_{n-i_r+1}} > \cdots > a_{j_n}, $$
$$ \lbrace a_{i_1+ \cdots + i_{k-1}+1}, \ldots, a_{i_1+ \cdots + i_k} \rbrace = 
\lbrace a_{j_{i_1+ \cdots + i_{k-1}+1}}, \ldots, a_{j_{i_1+ \cdots + i_{k}}} \rbrace $$
for $ k=1, \ldots, r $
and $ w $ is the length of the permutation mapping $ (a_1, \ldots, a_n) $ to $ (a_{j_1}, \ldots, a_{j_n}). $
The graded case now follows by noting that Verma modules are Koszul.
That is, there is a projective resolution of the graded Verma module by graded projective modules such that the projective module in homological degree $ i $ is generated by its degree $ i $ internal grading.  
\item The BGG resolution gives a resolution of a generalized Verma modules by Verma modules.  It has the appropriate grading by noting that $ \widetilde{M}(x.\omega_{\bf d})\langle l(x) \rangle \rightarrow \widetilde{M}(\omega_{\bf d}) $ is a homogeneous map of degree zero.
\end{enumerate}
\end{proof}

Finally, we let $ \widetilde{T}^{\mathfrak{p}}(w.\omega_{\bf d}) $ be the graded lift of the tilting module $ T^{\mathfrak{p}}(w.\omega_{\bf d}) $ such that $ \widetilde{M}^{\mathfrak{p}}(w.\omega_{\bf d}) $ occurs as a submodule.  Graded lifts of tilting modules were introduced in [MO].

\section{Special bases in the Grothendieck group}

\subsection{The Hecke algebra}
Let $ \mathcal{A} $ be the associative algebra over $ \mathbb{Q}(q) $ with generators $ H_{s_i} $ for each simple reflection $ s_i $ in the Weyl group $ \mathbb{S}_n $ and relations
$ H_{s_i} H_{s_j} = H_{s_j} H_{s_i} $ for $ |i-j|>1, $ $H_{s_i} H_{s_{i+1}} H_{s_i} = H_{s_{i+1}} H_{s_i} H_{s_{i+1}}, $ and $ (H_{s_i}+q)(H_{s_i}-q^{-1}) = 0. $

Let $ W_Q $ be a Young subgroup of $ W. $  Let $ W^Q $ be the set of minimal length coset representatives of $ W/W_Q. $  There is an $ \mathcal{A}- $ module $ M^Q $ defined as follows.  As a $ \mathbb{Q}(v) $ vector space, $ M^Q $ has basis $ m_y $ where $ y \in W^Q. $
Then $ H_{s_i} m_y = $
\begin{eqnarray*} 
m_{s_i y}+(q^{-1}-q)m_y & \textrm{ if } &l(s_i y)<l(y) \\
m_{s_i y} & \textrm{ if } & l(s_i y)>l(s_i), s_i y \in W^Q \\
q^{-1} m_y & \textrm{if} &l(s_i y)>l(y), s_i y \notin W^Q.
\end{eqnarray*}

Let $ d \colon \mathcal{A} \rightarrow \mathcal{A} $ be the algebra homomorphism given by $ d(q) = q^{-1} $ and $ d(H_x) = H_{x^{-1}}^{-1}. $
We extend this involution to an involution on $ M^Q $ given by $ d(m_e) = m_e $ and $ d(hm_e) = d(h) m_e $ where $ h \in \mathcal{A}. $

\begin{theorem}
There is a basis $ \lbrace \underline{m_x} | x \in W^Q \rbrace $ for $ M^Q $  such that $ d(\underline{m_y}) = \underline{m_y} $ and 
$ \underline{m_y} = \Sigma_x n_{x,y} m_x $ where $ n_{x,y} \in q\mathbb{Z}[q], $ and $ n_{y,y}. $
\end{theorem}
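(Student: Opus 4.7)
The plan is to follow Deodhar's adaptation of the Kazhdan--Lusztig construction to parabolic Hecke modules, by induction on $\ell(y)$.

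\textbf{Base case.} Set $\underline{m_e} = m_e$. Then $d(\underline{m_e}) = m_e$ by definition of the involution on $M^Q$, and the expansion condition is satisfied vacuously.

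\textbf{Inductive step.} Assume $\underline{m_x}$ has been constructed for all $x \in W^Q$ with $\ell(x) < \ell(y)$, satisfying the two properties and moreover forming a $\mathbb{Q}(q)$-basis of their span. Since $y \neq e$, pick a simple reflection $s = s_i$ with $sy < y$. A standard fact about minimal coset representatives is that if $y \in W^Q$ and $sy < y$, then $sy \in W^Q$ as well, so the inductive hypothesis applies to $sy$. Define $C_s := H_s + q$. Using the quadratic relation $(H_s + q)(H_s - q^{-1}) = 0$ one gets $H_s^{-1} = H_s + (q - q^{-1})$, so
\[
d(C_s) \;=\; H_s^{-1} + q^{-1} \;=\; H_s + q - q^{-1} + q^{-1} \;=\; C_s,
\]
so $C_s$ is $d$-invariant. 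Therefore $C_s \underline{m_{sy}}$ is $d$-invariant. Using the formulas for the action of $H_s$ on the $m_z$ together with $sy \in W^Q$ and $s\cdot sy = y \in W^Q$, one computes
\[
C_s \underline{m_{sy}} \;=\; m_y \;+\; \sum_{x < y,\, x \in W^Q} c_{x,y}\, m_x
\]
for some $c_{x,y} \in \mathbb{Z}[q,q^{-1}]$, and by self-duality each $c_{x,y}$ is symmetric under $q \leftrightarrow q^{-1}$.

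\textbf{Cleaning up.} Now define
\[
\underline{m_y} \;:=\; C_s \underline{m_{sy}} \;-\; \sum_{x < y,\, x \in W^Q} \mu_{x,y}\, \underline{m_x},
\]
where the integers $\mu_{x,y} \in \mathbb{Z}$ are chosen recursively, from largest $\ell(x)$ downward, so that the resulting coefficient of $m_x$ lies in $q\mathbb{Z}[q]$. Concretely, at each stage the coefficient to be corrected is a Laurent polynomial symmetric in $q \leftrightarrow q^{-1}$, hence of the form $\mu_{x,y} + \sum_{j > 0}(a_j q^j + a_j q^{-j})$ with $\mu_{x,y} \in \mathbb{Z}$; subtracting $\mu_{x,y} \underline{m_x}$ (which by induction is self-dual with leading term $m_x$ and lower terms in $q\mathbb{Z}[q]$) kills the constant term and the negative powers of $q$ in this coefficient while preserving self-duality of the partial sum. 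After all corrections, the resulting element is self-dual, has leading coefficient $1$ on $m_y$, and has all other coefficients in $q\mathbb{Z}[q]$.

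\textbf{Uniqueness.} If $b, b'$ are two elements satisfying the two conditions, then $b - b'$ is $d$-invariant and lies in $\sum_{x < y} q\mathbb{Z}[q]\, m_x$; applying $d$ forces the coefficients into $q^{-1}\mathbb{Z}[q^{-1}]$ as well, hence into $q\mathbb{Z}[q] \cap q^{-1}\mathbb{Z}[q^{-1}] = 0$, giving $b = b'$.

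\textbf{Main obstacle.} The delicate point is verifying that the correction coefficients $\mu_{x,y}$ are genuine integers (not just elements of $\mathbb{Z}[q,q^{-1}]$) and that the clean-up procedure does not spoil self-duality at intermediate steps. Both follow from the bar-invariance of $C_s \underline{m_{sy}}$ together with the inductive bar-invariance of the previously constructed $\underline{m_x}$, but this requires keeping careful track of how the symmetry $q \leftrightarrow q^{-1}$ interacts with the subtraction at each stage. One must also confirm at the start that $sy \in W^Q$ whenever $sy < y$ and $y \in W^Q$, which is the reason the module $M^Q$ admits such a canonical basis in the first place.
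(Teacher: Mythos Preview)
The paper does not give its own proof of this statement; it simply cites [Soe2] and [FKK], where the standard Kazhdan--Lusztig/Deodhar inductive construction is carried out. Your overall strategy is exactly that standard construction, so in spirit there is nothing to compare.

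There is, however, a genuine error in your clean-up step. You assert that ``by self-duality each $c_{x,y}$ is symmetric under $q \leftrightarrow q^{-1}$,'' and later that subtracting the integer $\mu_{x,y}$ ``kills the constant term and the negative powers of $q$.'' Bar-invariance of $C_s\underline{m_{sy}}$ does \emph{not} force the individual $m$-basis coefficients to be bar-symmetric, because $d(m_x)\neq m_x$ in general. Concretely, take $W=\mathbb{S}_3$, $W_Q=\{e\}$: a direct computation gives
\[
C_{s_1}\underline{m_{s_2 s_1}}
= m_{s_1 s_2 s_1}+q\,m_{s_2 s_1}+q\,m_{s_1 s_2}+q^{2} m_{s_2}+(1+q^{2})\,m_{s_1}+(q+q^{3})\,m_e,
\]
and $1+q^{2}$ is not symmetric. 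Moreover, even if a coefficient were of the form $\mu+\sum_{j>0}a_j(q^j+q^{-j})$, subtracting $\mu$ would leave $\sum_{j>0}a_j(q^j+q^{-j})$, which still has negative powers; so the argument as written does not close.

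The fix is to replace the false symmetry claim by the (easy) observation that every coefficient of $C_s\underline{m_{sy}}$ in the $m$-basis already lies in $\mathbb{Z}[q]$, with no negative powers at all. Indeed, in the three cases for $C_s m_x$ the only occurrences of $q^{-1}$ are in $m_{sx}+q^{-1}m_x$ (when $sx<x$) and in $(q+q^{-1})m_x$ (when $sx\notin W^Q$); in both cases that $q^{-1}$ multiplies $n_{x,sy}\in q\mathbb{Z}[q]$, landing in $\mathbb{Z}[q]$, while $C_s m_{sy}=m_y+q\,m_{sy}$ contributes no negative powers either. Once you know the coefficients sit in $\mathbb{Z}[q]$, taking $\mu_{x_0}$ to be the constant term of the top uncorrected coefficient and subtracting $\mu_{x_0}\underline{m_{x_0}}$ pushes that coefficient into $q\mathbb{Z}[q]$; the lower coefficients remain in $\mathbb{Z}[q]$ because $\underline{m_{x_0}}$ itself has $\mathbb{Z}[q]$-coefficients, and bar-invariance is preserved since $\mu_{x_0}\in\mathbb{Z}$. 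The recursion then terminates as you describe. Your uniqueness argument is essentially correct, though strictly speaking one must peel off the top coefficient first (where $d(m_{x_0})=m_{x_0}+\text{lower}$ gives $p_{x_0}=\overline{p_{x_0}}$) and then induct downward.
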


\begin{proof}
See [Soe2] or [FKK] for example.
\end{proof}

Fix an integral dominant weight $ \lambda $ whose stabilizer is the Young subgroup $ W_Q. $
Let $ x, y $ be shortest coset representatives of $ W/W_Q. $
Let $ n_Q^{x,y}(i) = (P(y.\lambda),M(x.\lambda) \langle i \rangle). $
Let $ P_Q^{x,y}(t) = \Sigma_i n_Q^{x,y}(i)t^i. $
The following proposition is theorem 3.11.4 of [BGS].  Note that the formula given there differs from the formula in proposition ~\ref{nilcoh} because in [BGS], longest coset representatives are considered.

\begin{prop}
\label{nilcoh}
Let $ P_{x,y}(q) $ be the usual Kazhdan-Lusztig polynomial of [KL].  Then
$$ P_Q^{x,y}(q) = \Sigma_{z \in W_Q} (-1)^{l(z)} P_{xz, y}(q^{-2}) q^{l(y)-l(x)}. $$ 
\end{prop}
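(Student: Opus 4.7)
The plan is to reduce the identity to the graded Kazhdan-Lusztig multiplicity formula in the regular block $\mathcal{O}_0(\mathfrak{gl}_n)$ and then transport the result to the singular block $\mathcal{O}_\lambda$ via translation functors. I would first invoke the graded Kazhdan-Lusztig conjecture proved in [BGS], which for $x, y \in W = \mathbb{S}_n$ gives
\begin{equation*}
\sum_i \bigl(P(y), M(x)\langle i \rangle\bigr)_{\mathcal{O}_0}\, q^i \;=\; q^{l(y) - l(x)} P_{x,y}(q^{-2}).
\end{equation*}

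For $x, y$ shortest representatives of $W/W_Q$, the graded isomorphism $\theta_0^\lambda P(y) \cong P(y.\lambda)$ (up to the overall shift implicit in our choice of graded lifts) together with the graded adjunction $(\theta_0^\lambda, \theta_\lambda^0)$ rewrites
\begin{equation*}
\bigl(P(y.\lambda), M(x.\lambda)\langle i \rangle\bigr)_{\mathcal{O}_\lambda} \;=\; \bigl(P(y),\, \theta_\lambda^0 M(x.\lambda)\langle i+c \rangle\bigr)_{\mathcal{O}_0}
\end{equation*}
for an explicit shift $c$ depending on $\lambda$. The class $[\theta_\lambda^0 M(x.\lambda)]$ in $[\mathcal{O}_0]$ has a Verma filtration supported on $\{M(xz) : z \in W_Q\}$, and Soergel's bimodule description of translation (lemma \ref{lemma29}) lets us read off the precise graded shifts in this filtration. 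Substituting back and invoking the first display, the right-hand side becomes a signed sum over $z \in W_Q$ of terms proportional to $P_{xz,y}(q^{-2})$; the alternating signs $(-1)^{l(z)}$ arise from the Koszul/inclusion-exclusion structure implicit in the graded adjunction (equivalently, from the BGG-type resolution of the corresponding generalized Verma module), and length-additivity $l(xz) = l(x) + l(z)$ for $z \in W_Q$ collapses the individual $q$-shifts into the uniform factor $q^{l(y)-l(x)}$.

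The main obstacle is reconciling the shortest-coset-representative convention used in the proposition with the longest-coset-representative convention of Theorem 3.11.4 of [BGS]. Substituting $\bar x = x w_0^Q$, $\bar y = y w_0^Q$ into [BGS]'s formula and applying the involution $z \mapsto w_0^Q z$ of $W_Q$, together with the length identity $l(w_0^Q z) = l(w_0^Q) - l(z)$, is what transforms the [BGS] identity into the stated one. Verifying that all grading shifts and signs cancel without residue is the delicate bookkeeping that constitutes the heart of the proof; the structural content is otherwise standard graded category $\mathcal{O}$ machinery combined with [BGS] 3.11.4.
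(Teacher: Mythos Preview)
The paper gives no argument here: the proposition is simply attributed to Theorem~3.11.4 of [BGS], with the one-line remark that [BGS] states it for longest rather than shortest coset representatives.  Your final paragraph is exactly in this spirit---accept [BGS]~3.11.4 as given and perform the substitution $\bar x=xw_0^Q$, $\bar y=yw_0^Q$, $z\mapsto w_0^Q z$ to pass between conventions---so that portion of your proposal matches the paper precisely.

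Your first two paragraphs, however, go further and sketch an \emph{independent} derivation of the singular formula from the regular one via translation and adjunction.  There is a real gap in that sketch.  The module $\theta_\lambda^0 M(x.\lambda)$ has a genuine Verma filtration with each $M(xz)$, $z\in W_Q$, occurring once with a non-negative grading shift; feeding this into the adjunction identity and the regular-block formula therefore produces an expression of the form
\[
\sum_{z\in W_Q} q^{a(z)}\,P_{xz,y}(q^{-2})\,q^{l(y)-l(xz)}
\]
with \emph{positive} weights $q^{a(z)}$, not the alternating sum $\sum_z (-1)^{l(z)}P_{xz,y}(q^{-2})q^{l(y)-l(x)}$.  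Your appeal to ``Koszul/inclusion--exclusion structure implicit in the graded adjunction'' does not supply the missing signs: graded adjunctions contribute only shifts, and the BGG resolution you invoke lives in the \emph{parabolic} category $\mathcal{O}_0^{\mathfrak q}$, not in the singular block $\mathcal{O}_\lambda$ where this computation takes place.  Converting the positive expression into the alternating one is precisely the non-formal content that [BGS]~3.11.4 (equivalently, the comparison of Soergel's two parabolic Hecke modules, or Koszul duality between $\mathcal{O}_\lambda$ and $\mathcal{O}_0^{\mathfrak q}$) is there to provide; it does not drop out of the translation-functor manipulation you outline.  So the additional derivation is not a valid alternative, and the proof reduces---as in the paper---to citing [BGS] and adjusting conventions.
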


Denote by $ A_Q $ the block of graded category $ \mathcal{O}(\mathfrak{gl}_n) $ corresponding to the integral dominant weight $ \lambda $ whose stabilizer is $ Q. $  Let $ (V_{k-1}^{\otimes n})_Q $ denote the corresponding weight space of $ V_{k-1}^{\otimes n}. $
 Consider the homomorphism $ \alpha \colon [\text{gmod}-A_Q] \rightarrow M^Q $ given by $ \alpha([\widetilde{M}(\sigma.\lambda)]) = m_{\sigma}. $
There is also a homomorphism $ \beta \colon[\text{gmod}-A_Q] \rightarrow V_{k-1}^{\otimes n}[Q] $ given by $ \beta([\widetilde{M}(\sigma.\lambda)]) = v_{i_1} \otimes \cdots \otimes v_{i_n} $
where 
$$ (i_1, \ldots, i_n) = \sigma.(k-1, k-1, \ldots, 0, 0). $$
There is a map $ \phi \colon M^Q \rightarrow V_{k-1}^{\otimes n}[Q] $ given by $ \phi(m_{\sigma}) = v_{i_1} \otimes \cdots \otimes v_{i_n} $ where
$ (i_1, \ldots, i_n) = w_0 \sigma.(k-1, k-1, \ldots, 0, 0). $
Finally, there is the graded derived twisting functor corresponding to the longest element in the Weyl group 
$$ L\widetilde{T}_{w_0}\langle l(w_0^Q) \rangle \colon D^b(\text{gmod}-A_Q) \rightarrow
D^b(\text{gmod}-A_Q).  $$
For a definition of the twisting functor see [AS].
The following lemma could be found in [FKS].
\begin{lemma}
There is an equality of maps: 
$ \phi \circ \alpha = \beta \circ [L\widetilde{T}_{w_0} \langle l(w_0^Q) \rangle] \colon \mathbb{Q}(q) \otimes_{\mathbb{Z}[q,q^{-1}]} [D^b(\text{gmod}-A_Q)] \rightarrow V_{k-1}^{\otimes n}[Q]. $
\end{lemma}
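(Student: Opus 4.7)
Both sides are $\mathbb{Q}(q)$-linear and the classes $\{[\widetilde{M}(\sigma.\lambda)] : \sigma \in (W/W_Q)_{\mathrm{shortest}}\}$ span $[D^b(\mathrm{gmod}\text{-}A_Q)]$ after inverting $q$, so it suffices to verify the equality on these classes. The plan is to compute each side explicitly on a Verma module and match the resulting tensor.

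For the left-hand side the computation is immediate from the definitions: $\alpha([\widetilde{M}(\sigma.\lambda)]) = m_\sigma$ and then $\phi(m_\sigma) = v_{i_1} \otimes \cdots \otimes v_{i_n}$ with $(i_1, \ldots, i_n) = w_0\sigma.(k-1,\ldots,0,0)$.

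For the right-hand side the key input is the known effect of the derived twisting functor on Verma modules: $L\widetilde{T}_{w_0} \widetilde{M}(\sigma.\lambda)$ is isomorphic to a graded shift of the dual Verma module $\widetilde{M}(w_0\sigma.\lambda)^{\vee}$ (see [AS]), and in the Grothendieck group dual Vermas and Vermas have the same class. I would first recall this fact, then track the internal grading shift so that, after applying $\langle l(w_0^Q)\rangle$, one obtains $[\widetilde{M}(w_0\sigma.\lambda)]$ exactly (the shift $l(w_0^Q)$ is tailored to cancel the normalization introduced by the self-duality of $\widetilde{M}(\omega_\lambda)^{\vee}$ and by passing through the longest element of $W_Q$). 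Applying $\beta$ then gives $v_{j_1} \otimes \cdots \otimes v_{j_n}$ with $(j_1, \ldots, j_n) = w_0\sigma.(k-1, \ldots, 0, 0)$, matching the left-hand side.

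The main obstacle, and the only non-formal step, is the precise bookkeeping of the grading shift under $L\widetilde{T}_{w_0}$. Once the graded Verma $\widetilde{M}(\sigma.\lambda)$ is normalized to have head in degree zero (as fixed earlier in the text), one must verify that $L\widetilde{T}_{w_0}\widetilde{M}(\sigma.\lambda)\langle l(w_0^Q)\rangle$ has Grothendieck class equal to $[\widetilde{M}(w_0\sigma.\lambda)]$ with no residual shift. This is where the factor $\langle l(w_0^Q)\rangle$ enters in the statement, and everything else is a matter of comparing the two prescriptions for $(i_1, \ldots, i_n)$.
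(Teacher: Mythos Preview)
The paper does not actually prove this lemma: it simply records that ``the following lemma could be found in [FKS]'' and states it without argument. So there is no in-paper proof to compare against. Your sketch is therefore more than the paper supplies, and the strategy---checking the identity on the spanning set of Verma classes and invoking the effect of the derived twisting functor on Verma modules---is the standard one, and is essentially how the result is established in [FKS]. In particular the fact from [AS] that $L T_{w_0}$ takes a Verma module to a dual Verma of the $w_0$-reflected highest weight, together with $[\nabla(\mu)]=[M(\mu)]$, is exactly the right input.

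The only point on which I would push back is your treatment of the grading shift. You describe the appearance of $\langle l(w_0^Q)\rangle$ somewhat heuristically (``tailored to cancel the normalization \ldots''), but this is precisely the content that needs to be pinned down, and it is not automatic. Concretely, one must show that with the chosen normalizations of graded Vermas (head in degree zero) and of the graded lift of $T_{w_0}$, the class $[L\widetilde{T}_{w_0}\widetilde{M}(\sigma.\lambda)]$ differs from $[\widetilde{M}(w_0\sigma.\lambda)]$ by exactly $q^{-l(w_0^Q)}$, independently of $\sigma$. In [FKS] this is handled by an explicit computation of the graded lift of $T_{w_0}$ on the dominant Verma and then propagating via translation; you should either reproduce that computation or cite it precisely. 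Note also a small bookkeeping issue: if $\sigma$ is a shortest representative for $W/W_Q$, then $w_0\sigma$ is generally not, so when you apply $\beta$ on the right-hand side you are implicitly using that $\widetilde{M}(w_0\sigma.\lambda)$ and $\widetilde{M}(\tau.\lambda)$ coincide for $\tau$ the shortest representative of $w_0\sigma W_Q$; this is fine since $\lambda$ is $W_Q$-fixed under the dot action, but it is worth saying.
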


The following is theorem 2.5' of [FKK].

\begin{lemma}
Let $ (j_1, \ldots, j_n) = w_0 \sigma.(k-1,k-1, \ldots, 0, 0). $  Then
$ \phi(\underline{m_{\sigma}}) = v_{j_1} \diamond \cdots \diamond v_{j_n}. $ 
\end{lemma}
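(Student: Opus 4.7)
The plan is to deduce this identification via the uniqueness characterization of both bases. The Kazhdan--Lusztig basis $\{\underline{m_\sigma}\}_{\sigma \in W^Q}$ of $M^Q$ is characterized by $d$-invariance together with the triangular expansion $\underline{m_\sigma} = m_\sigma + \sum_{\tau \ne \sigma} n_{\tau,\sigma} m_\tau$ with $n_{\tau,\sigma} \in q\mathbb{Z}[q]$, while the canonical basis $\{v_{j_1} \diamond \cdots \diamond v_{j_n}\}$ of the weight space $V_{k-1}^{\otimes n}[Q]$ is characterized by the two properties (\ref{kl1}) and (\ref{kl2}). Therefore it suffices to verify that $\phi(\underline{m_\sigma})$ satisfies the two characterizing properties of $v_{j_1} \diamond \cdots \diamond v_{j_n}$, after which the identity in the lemma follows from uniqueness.

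First I would verify the triangularity. Since $\phi(m_\tau) = v_{i_1} \otimes \cdots \otimes v_{i_n}$ where $(i_1,\ldots,i_n) = w_0 \tau.(k-1,\ldots,0,0)$, the vector $\phi(\underline{m_\sigma})$ expands with leading term $v_{j_1}\otimes\cdots\otimes v_{j_n}$ and remaining terms whose coefficients come from the $n_{\tau,\sigma} \in q\mathbb{Z}[q]$. The $w_0$-twist in the definition of $\phi$ is essential here: it reverses the relevant partial order on weight tuples and, crucially, converts the $q\mathbb{Z}[q]$-coefficients arising on the Hecke side into the $q^{-1}\mathbb{Z}[q^{-1}]$-coefficients required by (\ref{kl2}) on the tensor product side. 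This establishes the analogue of (\ref{kl2}) for $\phi(\underline{m_\sigma})$ with the correct leading term $v_{j_1}\otimes\cdots\otimes v_{j_n}$.

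The main step is to verify that $\phi \circ d = \psi \circ \phi$; granted this, $\psi(\phi(\underline{m_\sigma})) = \phi(d(\underline{m_\sigma})) = \phi(\underline{m_\sigma})$ gives (\ref{kl1}). This intertwining property is the quantum Schur--Weyl duality of bar involutions: it identifies the action of Lusztig's quasi-$R$-matrix $\Theta$ on the weight space $V_{k-1}^{\otimes n}[Q]$ with the inverse bar involution on the Hecke module $M^Q$. Concretely, one endows $V_{k-1}^{\otimes n}$ with an $\mathcal{A}$-module structure via $R$-matrices commuting with the $\mathcal{U}_q(\mathfrak{sl}_k)$-action, shows that $\phi$ is an isomorphism of $\mathcal{A}$-modules between $M^Q$ and $V_{k-1}^{\otimes n}[Q]$, and checks compatibility of the bar involutions on a single distinguished element (say, the image of $m_e$), after which $\mathcal{A}$-linearity propagates the equality to all of $M^Q$.

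The principal obstacle is precisely this intertwining of involutions. Although $\psi$ acts as the identity on each tensor factor, $\Theta$ contributes non-trivial off-diagonal terms on $V_{k-1}^{\otimes n}$ that must be matched against the formula $d(H_x) = H_{x^{-1}}^{-1}$. A direct expansion-and-match argument is possible but laborious, so the cleaner approach is to go through Schur--Weyl duality and reduce to a single-element check, from which the lemma then follows by combining the triangularity verification with uniqueness of the canonical basis.
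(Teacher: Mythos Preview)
The paper does not give a proof of this lemma at all; it simply attributes it to Theorem~2.5$'$ of [FKK]. Your outline is in fact the strategy that [FKK] carries out: transport the bar involution through quantum Schur--Weyl duality, match leading terms, and conclude by the uniqueness characterization of the canonical basis. So at the level of strategy your proposal is on target, and is essentially what the cited reference does.

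There is, however, a genuine slip in your triangularity step. You write that ``the $w_0$-twist in the definition of $\phi$ \ldots converts the $q\mathbb{Z}[q]$-coefficients arising on the Hecke side into the $q^{-1}\mathbb{Z}[q^{-1}]$-coefficients required by (\ref{kl2}).'' This is not correct: as defined here, $\phi$ is $\mathbb{Q}(q)$-linear and sends each $m_\tau$ to a single standard tensor; the permutation by $w_0$ only relabels which tensor you hit and does nothing to the scalar $n_{\tau,\sigma}$. So $\phi(\underline{m_\sigma})$ literally has its off-diagonal coefficients in $q\mathbb{Z}[q]$, not in $q^{-1}\mathbb{Z}[q^{-1}]$. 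The apparent conflict with (\ref{kl2}) is a normalization issue: the Hecke conventions used in the paper's statement of the basis $\{\underline{m_y}\}$ (following [Soe2]) and the canonical basis conventions in (\ref{kl2}) (following [Lus]) differ by $q\leftrightarrow q^{-1}$, and [FKK] is written in conventions where the two line up. To make your argument go through you must either (i) adopt the [FKK] normalization from the start so that both triangularities live in the same half of $\mathbb{Z}[q,q^{-1}]$, or (ii) insert an explicit $q\mapsto q^{-1}$ renormalization when passing between the two pictures and check that the intertwining of bar involutions survives it. What you cannot do is attribute the sign flip to the $w_0$-twist; that is a combinatorial reindexing, not a bar operation.
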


\begin{prop}
\label{selfdual}
The indecomposable projective module $ \widetilde{P}(y.\lambda) $ descends to the element $ \underline{m_y} $ of the positive self dual basis of $ M^Q $ under the map $ \alpha $ above.
\end{prop}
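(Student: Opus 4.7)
The plan is to identify $\alpha([\widetilde{P}(y.\lambda)])$ with $\underline{m_y}$ by verifying the two characterizing properties of the Kazhdan--Lusztig basis element: $d$-invariance, and a unitriangular expansion $\alpha([\widetilde{P}(y.\lambda)]) = m_y + \sum_{x \neq y} n_{x,y}\,m_x$ with $n_{x,y} \in q\mathbb{Z}[q]$.

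The unitriangularity is a direct consequence of the graded version of BGG reciprocity together with the definition of the parabolic Kazhdan--Lusztig polynomial. By construction of the graded lift, $\widetilde{P}(y.\lambda)$ admits a filtration by graded Verma modules in which $\widetilde{M}(x.\lambda)\langle i \rangle$ occurs with multiplicity $n_Q^{x,y}(i)$. Summing in the Grothendieck group and applying $\alpha$ yields
$$ \alpha([\widetilde{P}(y.\lambda)]) = \sum_{x \in W^Q} P_Q^{x,y}(q)\, m_x. $$
The normalization forces $P_Q^{y,y}(q) = 1$; positivity of $P_Q^{x,y}(q)$ for $x \neq y$ (i.e.\ membership in $q\mathbb{Z}[q]$) follows from Kazhdan--Lusztig positivity of the polynomials $P_{xz,y}$ and their degree bound, by inspection of the formula of Proposition~\ref{nilcoh}.

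The $d$-invariance I would handle by induction on $l(y)$. The base case $y = e$ is immediate since $\widetilde{P}(e.\lambda) = \widetilde{M}(\lambda)$ for dominant $\lambda$, so that $\alpha([\widetilde{P}(e.\lambda)]) = m_e$ is visibly $d$-fixed. For the inductive step, pick a simple reflection $s$ with $l(sy) < l(y)$ and write $y = s y'$. Then $\widetilde{P}(y.\lambda)$ appears once as a summand of the graded translation through-the-$s$-wall functor $\widetilde{\theta}_s$ applied to $\widetilde{P}(y'.\lambda)$, the remaining summands being shifted projectives $\widetilde{P}(x.\lambda)\langle j\rangle$ with $l(x) < l(y)$. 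On $M^Q$, the functor $\widetilde{\theta}_s$ categorifies multiplication by the $d$-invariant Kazhdan--Lusztig element $\underline{H}_s$, so $\underline{H}_s \cdot \alpha([\widetilde{P}(y'.\lambda)])$ is $d$-invariant by the inductive hypothesis; since the lower summands are also $d$-invariant by induction, so is $\alpha([\widetilde{P}(y.\lambda)])$.

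The main obstacle is the precise graded categorification identifying $\widetilde{\theta}_s$ with the action of $\underline{H}_s$, including the correct grading shifts that make the translation self-adjoint up to a shift and force the right bar-invariance in the Grothendieck group. This categorified Hecke algebra action is by now classical and is carried out in [Str1] and [FKS], so the inductive step can be reduced to these references.
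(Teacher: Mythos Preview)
Your argument is correct in outline but follows a different route from the paper.  The paper does not verify the two characterizing properties of $\underline{m_y}$ at all: it simply observes that both the graded Verma multiplicities $P_Q^{x,y}(q)$ and the structure constants $n_{x,y}$ of the basis $\underline{m_y}$ are already computed in the literature, and that the two formulas coincide.  Concretely, Proposition~\ref{nilcoh} (i.e.\ [BGS, 3.11.4]) expresses $P_Q^{x,y}(q)$ as an alternating sum $\sum_{z\in W_Q}(-1)^{l(z)}P_{xz,y}(q^{-2})q^{l(y)-l(x)}$, and in Soergel's notation [Soe2] this is exactly $\sum_{z\in W_Q}(-q)^{l(z)}h_{xz,y}=n_{x,y}$.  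Since $\underline{m_y}=\sum_x n_{x,y}m_x$ by definition, the identification $\alpha([\widetilde{P}(y.\lambda)])=\underline{m_y}$ is immediate.  No induction, no bar-invariance check.

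Your approach instead rebuilds the characterization, which is more self-contained but longer.  Two remarks.  First, in your unitriangularity step the appeal to Kazhdan--Lusztig \emph{positivity} is a red herring: the formula of Proposition~\ref{nilcoh} has alternating signs, so positivity of the individual $P_{xz,y}$ does not help; what actually puts each summand into $q\mathbb{Z}[q]$ is the degree bound $\deg P_{xz,y}\le\frac{1}{2}(l(y)-l(xz)-1)$ together with $l(xz)=l(x)+l(z)$ for $x\in W^Q$, $z\in W_Q$.  Second, your inductive bar-invariance argument hinges on the lower summands of $\widetilde{\theta}_s\widetilde{P}(y'.\lambda)$ being themselves bar-invariant, which amounts to the graded multiplicities $f_x(q)$ satisfying $\overline{f_x(q)}=f_x(q)$.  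You correctly locate this as the ``main obstacle'' and defer it to the graded self-adjointness of $\widetilde{\theta}_s$ from [Str1]; that is indeed what is needed, but note that the paper's direct formula-matching sidesteps this issue entirely.
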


\begin{proof}
By theorem 3.11.4 of [BGS], $ P_Q^{x,y}(q) $ gives the graded multiplicity of $ \widetilde{M}(x.\lambda) $ in $ \widetilde{P}(y.\lambda). $
On the other hand, by the proposition ~\ref{nilcoh}, 
$$ P_Q^{x,y}(q) = \Sigma_{z \in W_Q} (-1)^{l(z)} P_{xz, y}(q^{-2}) q^{l(y)-l(x)}. $$
In the notation of [Soe2], this is equal to 
$$ \Sigma_{z \in W_Q} {(-q)}^{l(z)} h_{xz,y} = n_{x,y}. $$
Recall from [Soe2] that $ \underline{m_y} = \Sigma_x n_{x,y} m_x. $
\end{proof}

\begin{theorem}
\label{tiltnonpar}
The indecomposable tilting module $ \widetilde{T}(a_1,\ldots,a_n) $ in $ \oplus_{\bf d} \text{gmod-}A_{\bf d} $ descends to the canonical basis element $ v_{a_1} \diamond \cdots \diamond v_{a_n} $ in the Grothendieck group under the map sending Verma modules to the standard basis.
\end{theorem}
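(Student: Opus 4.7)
The plan is to combine three ingredients already in hand: Proposition \ref{selfdual}, which identifies $[\widetilde{P}(y.\lambda)]$ with the positive self-dual basis element $\underline{m_y}$ of the Hecke module $M^Q$; the intertwining identity $\phi\circ\alpha = \beta\circ[L\widetilde{T}_{w_0}\langle l(w_0^Q)\rangle]$ from the lemma immediately preceding Proposition \ref{selfdual}; and the classical Ringel-duality fact that the derived twisting functor $LT_{w_0}$ carries indecomposable projectives to indecomposable tiltings. The strategy is to transport the identification ``projective $\leftrightarrow$ canonical basis of $M^Q$'' across the intertwiner to obtain ``tilting $\leftrightarrow$ canonical basis of $V_{k-1}^{\otimes n}[Q]$''.

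First I would establish the graded refinement of Ringel duality: there is an isomorphism
$$ L\widetilde{T}_{w_0}\,\widetilde{P}(y.\lambda)\langle l(w_0^Q)\rangle \;\cong\; \widetilde{T}(w_0 y.\lambda) $$
in $D^b(\mathrm{gmod}\text{-}A_Q)$. In the ungraded setting this is the content of [AS] (and is implicit in [MO] at the graded level). The identification is obtained by checking that $L\widetilde{T}_{w_0}$ carries the Verma submodule $\widetilde{M}(y.\lambda)\hookrightarrow\widetilde{P}(y.\lambda)$ to a Verma module $\widetilde{M}(w_0 y.\lambda)$ appearing as a submodule of the image, that the image is indecomposable, has a generalized Verma flag, and is self-dual --- which is precisely the defining property of the graded tilting $\widetilde{T}(w_0 y.\lambda)$ normalized at the end of Section 3.3.

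Now, given a highest weight $(a_1,\ldots,a_n)$, choose the unique $y\in W^Q$ with $w_0 y.(k-1,\ldots,0)=(a_1,\ldots,a_n)$. By the preceding step $\widetilde{T}(a_1,\ldots,a_n)\cong L\widetilde{T}_{w_0}\langle l(w_0^Q)\rangle\,\widetilde{P}(y.\lambda)$, so applying the intertwining identity, Proposition \ref{selfdual}, and the displayed formula $\phi(\underline{m_y})=v_{j_1}\diamond\cdots\diamond v_{j_n}$ for $(j_1,\ldots,j_n)=w_0 y.(k-1,\ldots,0)$ yields
$$ \beta([\widetilde{T}(a_1,\ldots,a_n)]) \;=\; \phi\bigl(\alpha([\widetilde{P}(y.\lambda)])\bigr) \;=\; \phi(\underline{m_y}) \;=\; v_{a_1}\diamond\cdots\diamond v_{a_n}, $$
which is the desired statement.

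The main obstacle is a careful matching of grading and cohomological shifts. The graded derived twisting functor produces both a nontrivial homological shift and an internal grading shift on a graded projective module, and the factor $\langle l(w_0^Q)\rangle$ appearing in the intertwining identity is calibrated precisely to compensate for these. One must therefore verify that the composite $L\widetilde{T}_{w_0}\langle l(w_0^Q)\rangle$ sends $\widetilde{P}(y.\lambda)$, normalized with its head in degree zero, to the graded lift of $\widetilde{T}(w_0 y.\lambda)$ normalized so that $\widetilde{M}(w_0 y.\lambda)$ occurs as a submodule with its highest weight line in degree zero. Once this bookkeeping is confirmed, the theorem follows immediately from the chain of identifications above.
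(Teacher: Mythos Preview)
Your proposal is correct and follows essentially the same route as the paper: both combine Proposition~\ref{selfdual}, the intertwining identity $\phi\circ\alpha=\beta\circ[L\widetilde{T}_{w_0}\langle l(w_0^Q)\rangle]$, the lemma $\phi(\underline{m_y})=v_{j_1}\diamond\cdots\diamond v_{j_n}$, and the graded Ringel-duality isomorphism $L\widetilde{T}_{w_0}\langle l(w_0^Q)\rangle\,\widetilde{P}(y.\lambda)\cong\widetilde{T}(w_0 y.\lambda)$. The only difference is that the paper cites Proposition~5.2c of [FKS] for this last isomorphism (including the grading bookkeeping you flag as the main obstacle), whereas you propose to derive it from [AS] and [MO].
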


\begin{proof}
By proposition ~\ref{selfdual}, indecomposable projective objects descend to the positive self dual basis in the Hecke module $ M^Q $ in the Grothendieck group.  Since $ \phi \circ \alpha = \beta \circ [L\widetilde{T}_{w_0} \langle l(w_0^Q) \rangle], $ and $ \phi $ maps the positive self dual basis to the canonical basis, $ L\widetilde{T}_{w_0} \langle l(w_0^Q) \rangle $ maps projective objects to objects which descend to the canonical basis in the Grothendieck group.  Proposition 5.2c of [FKS] shows that $ L\widetilde{T}_{w_0} \langle l(w_0^Q) \rangle \widetilde{P}(x.\lambda) \cong \widetilde{T}(w_0 x.\lambda). $
Thus tilting modules descend to the canonical basis via the map $ \beta. $
\end{proof}

\begin{example}
For the case of $ \mathcal{U}_q(\mathfrak{sl}_2), $ we have as in example 5.4 of [FKS], the identification $ [\widetilde{T}(1,0)] = v_1 \diamond v_0 = v_1 \otimes v_0 + q^{-1} v_0 \otimes v_1 $ and $ [\widetilde{T}(0,1)]=[\widetilde{M}(0,1)] = v_0 \diamond v_1 = v_0 \otimes v_1. $
\end{example}

Recall the definition of the involution $ \psi $ from section ~\ref{definitions}. 

\begin{lemma}
\label{commutes}
The maps $ \psi $ and $ \pi_{i_1, \ldots, i_r} $ commute:
$$ \psi \pi_{i_1, \ldots, i_r} = \pi_{i_1, \ldots, i_r} \psi \colon V_{k-1}^{\otimes i_1} \otimes \cdots \otimes V_{k-1}^{\otimes i_r} \rightarrow \Lambda^{i_1} V_{k-1} \otimes \cdots \otimes \Lambda^{i_r} V_{k-1}. $$
\end{lemma}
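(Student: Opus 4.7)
The plan is to reduce the identity to the single-factor case $\psi \circ \pi_j = \pi_j \circ \psi$ on $V_{k-1}^{\otimes j} \to \Lambda^j V_{k-1}$, and then handle that case via the isotypic decomposition of $V_{k-1}^{\otimes j}$.

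For the reduction, I will use that $\Theta$ lies in the completion of $\mathcal{U}_q(\mathfrak{sl}_k)^{\otimes 2}$, so it commutes with any tensor product $f \otimes g$ of $\mathcal{U}_q(\mathfrak{sl}_k)$-module homomorphisms. Combined with the recursive definition $\psi_{M \otimes N}(m \otimes n) = \Theta(\psi_M(m) \otimes \psi_N(n))$ and the fact that $\pi_{i_1} \otimes \cdots \otimes \pi_{i_r}$ is a tensor product of intertwiners, the general identity will reduce to the single-factor identities $\psi_{\Lambda^{i_s} V_{k-1}} \circ \pi_{i_s} = \pi_{i_s} \circ \psi_{V_{k-1}^{\otimes i_s}}$ for each $s$.

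For the single-factor case, $\pi_j$ is a $\mathcal{U}_q(\mathfrak{sl}_k)$-module surjection, so $\ker(\pi_j)$ is a submodule. By quantum Schur--Weyl duality, $\Lambda^j V_{k-1}$ appears in $V_{k-1}^{\otimes j}$ with multiplicity one, so $\ker(\pi_j)$ is canonically the sum of the other isotypic components. Because $\psi_{V_{k-1}^{\otimes j}}$ is semi-linear and intertwines the $\mathcal{U}_q(\mathfrak{sl}_k)$-action up to the bar automorphism of the quantum group, it preserves the isotypic decomposition and in particular $\ker(\pi_j)$. It therefore descends through $\pi_j$ to a semi-linear bar-intertwining involution $\overline{\psi}$ on the irreducible module $\Lambda^j V_{k-1}$. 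By a Schur-type argument, $\overline{\psi}$ and $\psi_{\Lambda^j V_{k-1}}$ agree up to a scalar, which I will pin down by evaluating both on the highest-weight basis vector $v_{k-1} \wedge \cdots \wedge v_{k-j}$, using the preimage $\iota_j(v_{k-1} \wedge \cdots \wedge v_{k-j})$ together with the bar-invariant identity $\pi_j \circ \iota_j = (-1)^{j(j-1)/2}[j]_q! \cdot \mathrm{id}$.

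The main obstacle will be to verify that the quantum antisymmetrizer $\iota_j(v_{k-1} \wedge \cdots \wedge v_{k-j})$ is itself bar-invariant in $V_{k-1}^{\otimes j}$. I expect to prove this by induction on $j$, exploiting the fact that $v_{k-1}$ is annihilated by every $E_i$, so that $\Theta$ acts trivially on tensors whose first entry is $v_{k-1}$ and the iterative unwinding of the quantum antisymmetrizer becomes tractable.
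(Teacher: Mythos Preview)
Your reduction to the single-factor case via the recursive definition $\psi_{M\otimes N}=\Theta\circ(\psi_M\otimes\psi_N)$ and the fact that $\Theta$ commutes with tensor products of intertwiners is exactly the paper's inductive step.

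For the base case $r=1$, however, you take a genuinely different route. The paper argues categorically: by Theorem~\ref{tiltnonpar} the indecomposable tilting modules descend to the canonical basis of $V_{k-1}^{\otimes j}$, and by Corollary~\ref{zucktilt} together with Proposition~\ref{zuckgroth} the derived Zuckerman functor (which categorifies $\pi_j$) sends each of these either to zero or to a shifted tilting object in the maximal parabolic category, whose Grothendieck image is one of the wedges $v_{a_1}\wedge\cdots\wedge v_{a_j}$. Hence $\pi_j$ carries canonical basis elements to canonical basis elements or to zero, and since both sides consist of $\psi$-fixed vectors, commutation is immediate. Your argument instead stays purely inside representation theory: $\psi$ preserves isotypic components, so it descends through $\pi_j$; Schur's lemma for bar-semilinear intertwiners forces the descended map to be a scalar multiple of $\psi_{\Lambda^j}$; and you fix the scalar on the highest-weight line. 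This is more elementary and self-contained, avoiding the categorification machinery; the paper's proof is shorter only because that machinery has already been built.

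One remark on your ``main obstacle''. Rather than proving bar-invariance of the full antisymmetrizer $\iota_j(v_{k-1}\wedge\cdots\wedge v_{k-j})$, it is cleaner to use a single pure tensor as a preimage. With Lusztig's actual convention $\Theta_\nu\in \mathcal{U}^-_\nu\otimes \mathcal{U}^+_\nu$ (the paper's description seems to have the two factors swapped, as one sees by testing against the example $v_0\diamond v_1=v_0\otimes v_1$), the increasing tensor $v_{k-j}\otimes v_{k-j+1}\otimes\cdots\otimes v_{k-1}$ is bar-invariant: peel off $v_{k-1}$ from the right using $E_i v_{k-1}=0$, then note that on the remaining factor the $E_{k-1},F_{k-1}$ piece of $\Theta$ acts trivially (no $v_{k-1}$ is present to be lowered), so one is effectively working over $\mathcal{U}_q(\mathfrak{sl}_{k-1})$ and can induct. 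Since $\pi_j$ sends this tensor to $v_{k-1}\wedge\cdots\wedge v_{k-j}$ on the nose, your scalar is pinned to $1$ without computing $[j]_q!$. Your proposed induction on $\iota_j$ would also work, but the bookkeeping over all $j!$ terms is heavier than necessary.
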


\begin{proof}
First consider the case $ r=1. $  The tilting modules in $ \oplus_{\bf d} \text{gmod-}A_{\bf d}^{i_1} $ descend to the canonical basis of $ V_{k-1}^{\otimes i_1}. $
Note that if this is not a regular block, then $ \text{gmod-}A_{\bf d}^{i_1} = 0. $
If $ w \neq w_{0}^{\bf d}, $ then by corollary ~\ref{zucktilt}, $ L\widetilde{Z}^{(i_1)} \widetilde{T}(w.\omega_{\bf d}) = 0. $  If $ w=w_0^{\bf d}, $ then in the Grothendieck group, $ [\widetilde{T}(w.\omega_{\bf d})]=v_{a_1} \otimes \cdots \otimes v_{a_{i_1}} $ with $ a_1 \leq \cdots \leq a_{i_1}. $  If any of these are equalities, 
$ \pi_{i_1} (v_{a_1} \otimes \cdots \otimes v_{a_{i_1}}) = 0. $
If they are all strict inequalities, then $ \pi_{i_1}(v_{a_1} \otimes \cdots \otimes v_{a_{i_1}}) = v_{a_{i_1}} \wedge \cdots \wedge v_{a_1}. $  Thus $ \pi_{i_1} $ sends canonical basis elements to canonical basis elements.

Now we see that
$$ \pi_{i_1, \ldots, i_r} \psi = (\pi_{i_1, \ldots, i_{r-1}} \otimes \pi_{i_r}) \circ \Theta(\psi \otimes \psi) $$
where the maps $ \psi $ on the right hand side are the involutions for their respective representations.
This is equal to 
$ \Theta \circ  (\pi_{i_1, \ldots, i_{r-1}} \otimes \pi_{i_r}) \circ (\psi \otimes \psi). $
By induction this is equal to
$$ \Theta \circ (\psi \pi_{i_1, \ldots, i_{r-1}} \otimes \psi \pi_{i_r}) = \psi \circ \pi_{i_1, \ldots, i_r}. $$
\end{proof}

\begin{prop}
\label{projcan}
For each element $ v'_{\diamond} $ in the canonical basis of $ \Lambda^{i_1} V_{k-1} \otimes \cdots \otimes \Lambda^{i_r} V_{k-1}, $ there exists an element $ v_{\diamond} $ in the canonical basis of $ V_{k-1}^{\otimes (i_1+\cdots+i_r)} $ such that
$ \pi_{i_1, \ldots, i_r}(v_{\diamond})=v'_{\diamond}. $
\end{prop}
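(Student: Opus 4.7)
The plan is to construct a canonical basis element $v_{\diamond}$ of $V_{k-1}^{\otimes n}$, where $n = i_1 + \cdots + i_r$, and to verify that $\pi_{i_1, \ldots, i_r}(v_{\diamond})$ satisfies the two characterizing properties (\ref{kl1}) and (\ref{kl2}) of the canonical basis; Lusztig's uniqueness theorem then forces $\pi_{i_1, \ldots, i_r}(v_{\diamond}) = v'_{\diamond}$.

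Write the leading standard basis term of $v'_{\diamond}$ as $v' = b^{i_1}_{j_1} \otimes \cdots \otimes b^{i_r}_{j_r}$ with $b^{i_s}_{j_s} = v_{a^{(s)}_1} \wedge \cdots \wedge v_{a^{(s)}_{i_s}}$ and $a^{(s)}_1 > \cdots > a^{(s)}_{i_s}$. I would concatenate the blocks into a single sequence $(a_1, \ldots, a_n)$, choosing the intra-block order so that each $\pi_{i_s}$ maps the corresponding segment of $v := v_{a_1} \otimes \cdots \otimes v_{a_n}$ to the wedge $b^{i_s}_{j_s}$ with coefficient one; inspection of the definition of $\pi_{i_s}$ shows this corresponds to placing the indices of each block in increasing order (the permutation $w_0^{(s)}$ contributes $(-1)^0 q^0 = 1$). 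Let $v_{\diamond} := v_{a_1} \diamond \cdots \diamond v_{a_n}$ be the associated canonical basis element of $V_{k-1}^{\otimes n}$.

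Property (\ref{kl1}) for $\pi(v_{\diamond})$ is immediate from Lemma~\ref{commutes} together with the $\psi$-invariance of $v_{\diamond}$. For property (\ref{kl2}), expand $v_{\diamond} = v + \sum_w c_w w$ with $c_w \in q^{-1}\mathbb{Z}[q^{-1}]$ and $w$ ranging over the remaining standard basis elements of $V_{k-1}^{\otimes n}$. Applying $\pi$ term by term, the leading summand gives $\pi(v) = v'$ by construction, while each $\pi(w)$ is either zero (when some block of $w$ has a repeated index, annihilating the wedge) or equals $\pm q^{e_w}$ times a parabolic standard basis element, with $e_w \leq 0$ dictated by the intra-block permutation lengths appearing in the definition of $\pi_{i_s}$. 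Hence each $c_w \pi(w)$ lies in $q^{-1}\mathbb{Z}[q^{-1}]$ times a parabolic standard basis element, so $\pi(v_{\diamond}) - v'$ lies in $q^{-1}$ times the $\mathbb{Z}[q^{-1}]$-lattice spanned by the parabolic standard basis. Combined with $\psi$-invariance, Lusztig's uniqueness of the canonical basis within this coset yields $\pi(v_{\diamond}) = v'_{\diamond}$.

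The most delicate point will be book-keeping the sign and $q$-power conventions so that $\pi(v) = v'$ holds with coefficient precisely one under the chosen normalizations of $\pi_{i_s}$ and of the canonical basis. Some intra-block permutations $w$ of the leading term may also project onto a multiple of $v'$, so the coefficient of $v'$ in $\pi(v_{\diamond})$ takes the a priori form $1 + \alpha$ with $\alpha \in q^{-1}\mathbb{Z}[q^{-1}]$; however, uniqueness of the canonical basis (together with the strong form of (\ref{kl2})) forces this $\alpha$ to vanish, so no independent calculation of $\alpha$ is required.
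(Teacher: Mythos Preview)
Your overall strategy coincides with the paper's: pick $v_{\diamond}$ with blockwise-increasing indices, use Lemma~\ref{commutes} for property~(\ref{kl1}), then expand $v_{\diamond}$ in the standard basis and push through $\pi_{i_1,\ldots,i_r}$ to check property~(\ref{kl2}). The one substantive divergence is in how you control the coefficient of $v'$ in $\pi(v_{\diamond})$. The paper invokes the categorification at this point: it identifies $\pi(v_{\diamond})$ with $[L\widetilde{Z}^{\mathfrak p}\widetilde{T}(v_{\diamond})]$, observes this is an indecomposable parabolic tilting object (by Corollary~\ref{zucktilt} and the lemma following it), and then uses that in such a tilting object the bottom generalized Verma $\widetilde{M}^{\mathfrak p}(v')$ occurs with multiplicity exactly one. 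You instead bypass this by noting that $\pi(v_{\diamond})-v'_{\diamond}$ is $\psi$-invariant and lies in $q^{-1}\mathcal{L}$ (where $\mathcal{L}$ is the $\mathbb{Z}[q^{-1}]$-lattice on the standard basis), hence vanishes by the standard bar-invariance argument underlying Lusztig's uniqueness theorem.

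Your route is more elementary and self-contained: it needs only the defining properties of the canonical basis and the explicit formula for $\pi_{i_s}$, not any category-$\mathcal{O}$ input. The paper's route, while heavier, is the natural one in context, since the very next theorem (Theorem~\ref{tiltpar}) needs precisely the statement that $L\widetilde{Z}^{\mathfrak p}\widetilde{T}(v_{\diamond})$ is the indecomposable tilting $\widetilde{T}^{\mathfrak p}(v'_{\diamond})$; so the categorical argument is doing double duty there. Your argument is correct as written, including the handling of the $1+\alpha$ issue in the last paragraph.
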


\begin{proof}
Condition ~\ref{kl1} of an element to be in the canonical basis is satisfied due to lemma ~\ref{commutes}.  

Suppose $ a_1 < \cdots < a_{i_1}, \ldots, a_{n-i_r+1} < \cdots < a_n. $
Let
\begin{eqnarray*}
v &= &v_{a_1} \otimes \cdots \otimes v_{a_n} \\
v_{\diamond} &= &v_{a_1} \diamond \cdots \diamond v_{a_n} \\
v' &= &(v_{a_{i_1}} \wedge \cdots \wedge v_{a_1}) \otimes \cdots \otimes (v_{a_n} \wedge \cdots \wedge v_{a_{n-i_r+1}}) \\
v_{\diamond}' &= &(v_{a_{i_1}} \wedge \cdots \wedge v_{a_1}) \diamond \cdots \diamond (v_{a_n} \wedge \cdots \wedge v_{a_{n-i_r+1}})
\end{eqnarray*}

Then $ v_{\diamond} = v + \Sigma_{w \neq v} q^{-1}\mathbb{Z}[q^{-1}] w, $ where the summation is over the standard basis of $ V_{k-1}^{\otimes(i_1+\cdots+i_r)}. $
By the formulas for projection from section ~\ref{definitions}, 
$$ \pi_{i_1,\ldots,i_r} v_{\diamond} = v' + \Sigma_{w'} q^{-1}\mathbb{Z}[q^{-1}]w', $$
where the summation is over the standard basis of $ \Lambda^{i_1}V_{k-1} \otimes \cdots \otimes \Lambda^{i_r}V_{k-1}. $
The proof will be complete once it is verified that $ w' $ never equals $ v'. $

We denote the parabolic subalgebra determined by $ (i_1,\ldots,i_r) $ by $ \mathfrak{p}. $
Let
\begin{eqnarray*}
\widetilde{T}(v_{\diamond})  &= &\widetilde{T}(a_1,\cdots,a_n)\\
\widetilde{M}(v) &= &\widetilde{M}(a_1,\cdots,a_n)\\
\widetilde{T}^{\mathfrak{p}}(v_{\diamond}') &= &\widetilde{T}^{\mathfrak{p}}(a_{i_1},\ldots,a_1,\ldots,a_n,\ldots,a_{n-i_r+1})\\
\widetilde{M}^{\mathfrak{p}}(v') &= &\widetilde{M}^{\mathfrak{p}}(a_{i_1},\ldots,a_1,\ldots,a_n,\ldots,a_{n-i_r+1})
\end{eqnarray*}
Then
$$ \pi_{i_1,\ldots,i_r}(v_{\diamond})=[L\widetilde{Z}^{\mathfrak{p}}[a_{i_1,\ldots,i_r}]\widetilde{T}(v_{\diamond})] = [\widetilde{M}^{\mathfrak{p}}(v')]+q^{-1}\mathbb{Z}[q,q^{-1}] \Sigma_{w'} [\widetilde{M}^{\mathfrak{p}}(w')]. $$
Now note that $ L\widetilde{Z}^{\mathfrak{p}}[a_{i_1,\ldots,i_r}] \widetilde{T}(v_{\diamond}) $ is a (possibly shifted) tilting module $ \widetilde{T}^{\mathfrak{p}}(v_{\diamond}'). $  Thus a (possibly shifted) generalized Verma module $ \widetilde{M}^{\mathfrak{p}}(v') $ occurs exactly once in the generalized Verma flag.  Thus
$$ \pi_{i_1,\ldots,i_r} (v_{\diamond}) = v' + \Sigma_{w'\neq v'} q^{-1}\mathbb{Z}[q^{-1}]w'. $$
Therefore condition ~\ref{kl2} of section ~\ref{definitions} is satisfied.
\end{proof}

Recall the definition of $ \widetilde{T}^{\mathfrak{p}}(v_{\diamond}') $ from the proof of proposition ~\ref{projcan}.

\begin{theorem}
\label{tiltpar}
Indecomposable tilting objects $ \widetilde{T}^{\mathfrak{p}}(v_{\diamond}') $ in the parabolic subcategory $ \oplus_{\bf d} \text{gmod-} A_{\bf d}^{\mathfrak{p}} $ descend to the canonical basis in the Grothendieck group under the map sending generalized Verma modules to the standard basis.
\end{theorem}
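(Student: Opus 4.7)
The plan is to reduce the parabolic statement to the non-parabolic case already proved in Theorem \ref{tiltnonpar} by transporting tilting modules along the graded derived Zuckerman functor. The whole argument essentially repackages what was already established inside the proof of Proposition \ref{projcan}.

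First, given a canonical basis element $v_{\diamond}'$ in $\Lambda^{i_1}V_{k-1} \otimes \cdots \otimes \Lambda^{i_r}V_{k-1}$, I would invoke Proposition \ref{projcan} to produce a canonical basis element $v_{\diamond}$ in $V_{k-1}^{\otimes(i_1+\cdots+i_r)}$ with $\pi_{i_1,\ldots,i_r}(v_{\diamond}) = v_{\diamond}'$. By Theorem \ref{tiltnonpar}, the indecomposable tilting module $\widetilde{T}(v_{\diamond})$ categorifies $v_{\diamond}$, i.e.\ $[\widetilde{T}(v_{\diamond})] = v_{\diamond}$ in the Grothendieck group.

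Next, I would apply the shifted graded derived Zuckerman functor $L\widetilde{Z}^{\mathfrak{p}}[a_{i_1,\ldots,i_r}]$ to $\widetilde{T}(v_{\diamond})$. By the graded lift of Corollary \ref{zucktilt}, this functor takes indecomposable tilting modules labelled by shortest coset representatives to indecomposable (shifted) tilting modules in the parabolic block, and annihilates the others; the explicit form of $v_{\diamond}$ produced in the proof of Proposition \ref{projcan} (with strictly increasing blocks $a_1 < \cdots < a_{i_1}$, etc.) guarantees we land in the non-vanishing case, yielding $L\widetilde{Z}^{\mathfrak{p}}[a_{i_1,\ldots,i_r}]\widetilde{T}(v_{\diamond}) \cong \widetilde{T}^{\mathfrak{p}}(v_{\diamond}')$ as graded objects (up to internal shift absorbed into the definition of $\widetilde{T}^{\mathfrak{p}}$). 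Finally, Proposition \ref{zuckgroth} identifies the induced map on Grothendieck groups with $\pi_{i_1}\otimes\cdots\otimes\pi_{i_r}$, hence
\[
[\widetilde{T}^{\mathfrak{p}}(v_{\diamond}')] = \pi_{i_1,\ldots,i_r}(v_{\diamond}) = v_{\diamond}',
\]
which is the claim.

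The delicate point, and what I would verify most carefully, is the bookkeeping of grading shifts: matching the homological shift $[a_{i_1,\ldots,i_r}]$ appearing in Proposition \ref{zuckgroth} with the shift $[l(w_{0}^{\bf i})]$ from Corollary \ref{zucktilt}, and confirming that the graded lift of the parabolic tilting module chosen in the definition (with $\widetilde{M}^{\mathfrak{p}}(w.\omega_{\bf d})$ as a submodule concentrated in the correct degree) is precisely the one obtained from $L\widetilde{Z}^{\mathfrak{p}}$ applied to $\widetilde{T}(v_{\diamond})$ without any extra internal shift. Once these normalizations agree, the theorem follows without further calculation.
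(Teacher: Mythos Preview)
Your proposal is correct and follows essentially the same route as the paper: lift to the non-parabolic case via Theorem~\ref{tiltnonpar}, apply the derived Zuckerman functor (using Corollary~\ref{zucktilt} to land on indecomposable parabolic tiltings), and then invoke Propositions~\ref{zuckgroth} and~\ref{projcan} to identify the image in the Grothendieck group with $v_{\diamond}'$. Your added remarks on the grading-shift bookkeeping are a welcome elaboration of a point the paper leaves implicit.
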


\begin{proof}
Indecomposable tilting modules $ \widetilde{T}(v_{\diamond}) $ in $ \oplus_{\bf d} \text{gmod}-A_{\bf d} $ descend in the Grothendieck group to canonical basis elements of $ V_{k-1}^{\otimes(i_1+\cdots+i_r)}.$  The derived Zuckerman functor maps these tilting modules to indecomposable tilting modules $ \widetilde{T}^{\mathfrak{p}}(v_{\diamond}'). $ The theorem follows in light of propositions ~\ref{zuckgroth} and ~\ref{projcan}.
\end{proof}

\begin{theorem}
Let $ \lbrace b_1, \ldots, b_m \rbrace $ be the canonical basis for the tensor product of finite dimensional irreducible $ \mathcal{U}_q(\mathfrak{sl}_k) $ modules $ V(\lambda_1) \otimes \cdots \otimes V(\lambda_k).$ Let $ E_i, F_i, i=1,\ldots,k-1 $ be Chevalley generators for this algebra.  Then $ E_i b_j = \Sigma_n c_{i,j,n} b_n, F_i b_j = \Sigma_n d_{i, j, n} b_n $ where $ c_{i,j,n}, d_{i,j,n} \in \mathbb{N}[q,q^{-1}] $ for all $ i,j,n. $
\end{theorem}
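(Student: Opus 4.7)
The plan is to first establish the theorem in the special case of tensor products of fundamental representations $\Lambda^{i_1} V_{k-1} \otimes \cdots \otimes \Lambda^{i_r} V_{k-1}$, where the parabolic-singular categorification is already developed, and then to deduce the general case by realizing $V(\lambda_1) \otimes \cdots \otimes V(\lambda_k)$ as a direct summand of a tensor product of fundamentals compatible with canonical bases.

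For the fundamental case, I would invoke Theorem~\ref{tiltpar}: after the identification
$$ \mathbb{Q}(q) \otimes_{\mathbb{Z}[q,q^{-1}]} [\oplus_{\bf d}\,\text{gmod-}A_{\bf d}^{\mathfrak{p}}] \cong \Lambda^{i_1} V_{k-1} \otimes \cdots \otimes \Lambda^{i_r} V_{k-1}, $$
the indecomposable graded tilting modules $\widetilde{T}^{\mathfrak{p}}(v_{\diamond}')$ descend to the canonical basis. Next I would observe that the graded functors $\widetilde{\mathcal{E}}_i$ and $\widetilde{\mathcal{F}}_i$, which categorify $E_i$ and $F_i$, are (by Lemma~\ref{lemma30} and the construction in Section~3.3) graded lifts of projective functors built from translations onto and off of walls. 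It is a standard fact in category~$\mathcal{O}$ that projective functors preserve the additive subcategory of tilting modules: any indecomposable tilting is a summand of a translation of a dominant (generalized) Verma, projective functors are exact and commute with such translations up to direct summands, and they preserve both Verma and dual Verma filtrations. Passing to the graded parabolic setting, this implies
$$ \widetilde{\mathcal{E}}_i \widetilde{T}^{\mathfrak{p}}(v_{\diamond}') \cong \bigoplus_n \widetilde{T}^{\mathfrak{p}}(v_{\diamond,n}')\langle m_{i,\diamond,n,\ell}\rangle^{\oplus e_{i,\diamond,n,\ell}}, $$
and similarly for $\widetilde{\mathcal{F}}_i$. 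Taking classes in the Grothendieck group yields coefficients $c_{i,\diamond,n} = \sum_\ell e_{i,\diamond,n,\ell}\, q^{m_{i,\diamond,n,\ell}} \in \mathbb{N}[q,q^{-1}]$, with positivity coming from the multiplicities of indecomposable direct summands and the Laurent-polynomial nature from the integer grading shifts. This proves the theorem for tensor products of fundamental representations.

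For the reduction of the general case to the fundamental case, I would use the following classical fact: writing each highest weight as $\lambda_j = \sum_i a_i^{(j)}\omega_i$, the module $V(\lambda_j)$ embeds as a direct summand of $\bigotimes_i (\Lambda^i V_{k-1})^{\otimes a_i^{(j)}}$, and more generally $V(\lambda_1) \otimes \cdots \otimes V(\lambda_k)$ is a direct summand of an appropriate tensor product $W$ of fundamental representations. By Lusztig's theory of based modules (Chapter~27 of [Lus]), this embedding is compatible with the canonical bases: the canonical basis of $V(\lambda_1) \otimes \cdots \otimes V(\lambda_k)$ corresponds to a subset of the canonical basis of $W$, and the complementary direct summand has its own canonical sub-basis. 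Since $E_i$ and $F_i$ act via the coproduct and respect the direct sum decomposition, the matrix of each Chevalley generator on the canonical basis of $V(\lambda_1) \otimes \cdots \otimes V(\lambda_k)$ is a submatrix of its action on the canonical basis of $W$, and positivity of the latter forces positivity of the former.

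The main obstacle will be the rigorous justification of the last reduction step, namely that $V(\lambda_1) \otimes \cdots \otimes V(\lambda_k)$ really does sit inside a tensor product of fundamentals as a \emph{based} direct summand, so that its canonical basis appears as a genuine subset of the ambient canonical basis and the Chevalley generators restrict to the submatrix one expects. Once this based-module compatibility is verified (by checking Lusztig's bar-invariance (\ref{kl1}) and the triangularity condition (\ref{kl2}) on the restricted basis), the transfer of positivity from the fundamental case is immediate, completing the proof of the theorem.
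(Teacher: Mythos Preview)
Your proposal is correct and follows essentially the same route as the paper: establish positivity for tensor products of fundamental representations via Theorem~\ref{tiltpar} and the fact that graded projective functors preserve tilting modules, then reduce the general case by embedding $V(\lambda_1)\otimes\cdots\otimes V(\lambda_k)$ into a tensor product of fundamentals and invoking Lusztig's result that the canonical basis of the latter restricts to the canonical basis of the former. The paper's proof is terser but structurally identical, and your identification of the based-module compatibility from [Lus] as the key input for the reduction step is exactly what the paper appeals to.
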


\begin{proof}
First assume the representations in the tensor product are fundamental representations.
By theorem ~\ref{tiltpar}, the indecomposable tilting modules with specified graded lifts descend to the canonical basis of 
$ \Lambda^{i_1} V_{k-1} \otimes \cdots \otimes \Lambda^{i_r} V_{k-1}. $  
Since projective functors map tilting modules to tilting modules, the coefficients of the canonical basis under the action of $ E_i $ and $ F_i $ are certain multiplicities.  The positivity for the tensor products of fundamental representations now follows. 

A tensor product of finite dimensional, irreducible module may be embedded in a tensor product of fundamental representations.  Since a canonical basis for the tensor product of fundamental representations restricts to the canonical basis for a tensor product of the corresponding irreducible representations [Lus], we now have the positivity result in this more general case.

\end{proof}

\end{document}